\newtheorem{theorem}{Theorem}[section]
\newtheorem{corollary}[theorem]{Corollary}
\newtheorem{lemma}[theorem]{Lemma}
\theoremstyle{definition}
\newtheorem{definition}[theorem]{\sc Definition}
\theoremstyle{remark}
\newtheorem{remark}[theorem]{\sc Remark}
\theoremstyle{remark}
\theoremstyle{remark}
\newtheorem{example}[theorem]{\sc Example}
\theoremstyle{remark}
\theoremstyle{remark}
\newtheorem{convention}[theorem]{\sc Convention}
\theoremstyle{remark}
\newtheorem{lem/defi}[thm]{Lemma/Definition}
\newtheorem{defi/lem}[thm]{Definition/Lemma}
\newtheorem{prop/defi}[thm]{Proposition/Definition}
\theoremstyle{definition}
\theoremstyle{remark}
\newcommand{\hot}{\mathrm{h.o.t.}}
\newcommand{\im}{\mathop{\rm{Im}}\nolimits}
\newcommand{\mult}{{\rm{mult}}}
\newcommand{\ord}{\mathop{\rm{ord}}}
\newcommand{\grad}{\mathop{\rm{grad}}\nolimits}
\newcommand{\Horn}{{\rm{Horn}}}
\newcommand{\In}{\operatorname{in}}
\newcommand{\Supp}{\operatorname{supp}}
\newcommand{\GC}{\mathcal{GC}}
\newcommand{\Cone}{\mathrm{Cone}}
\newcommand{\be}{\begin{equation}}
\newcommand{\ee}{\end{equation}}
\newcommand{\beqn}{\begin{eqnarray*}}
\newcommand{\eeqn}{\end{eqnarray*}}
\newcommand{\cC}{{\mathcal C}}
\newcommand{\PP}{\mathcal{NP}}
\newcommand{\bC}{{\mathbb C}}
\newcommand{\F}{\mathbb{F}}
\newcommand{\bN}{{\mathbb N}}
\newcommand{\bZ}{{\mathbb Z}}
\newcommand{\bQ}{{\mathbb Q}}
\newcommand{\C}{{\mathbb C}}
\newcommand{\Q}{\mathbb{Q}}
\newcommand{\R}{\mathbb{R}}
\title{Bi-Lipschitz invariance of Newton polygons along gradient canyons}
\author{Piotr Migus}
\address{Air Force Institute of Technology,
ul. Ksi\c{e}cia Boles\l awa 6,
01-494 Warsaw, Poland}
\email{migus.piotr@gmail.com}
\author{Lauren\c tiu P\u aunescu}
\address{School of Mathematics and Statistics, University of Sydney,
  Sydney, NSW, 2006, Australia.}
\email{laurent@maths.usyd.edu.au}
\author{Mihai Tib\u ar\textsuperscript{*}}
\address{Math\' ematiques, UMR 8524 CNRS,
Universit\'e de Lille, \  59655 Villeneuve d'Ascq, France.}
\email{mtibar@univ-lille.fr}
\keywords{polar curves, Lipschitz invariants}
\subjclass[2010]{32S55, 32S15, 14H20, 58K20, 32S05}
\dedicatory{Dedicated to the memory of our friend and collaborator Mihai Tib\u ar.}
\begin{document}

\begingroup
\renewcommand{\thefootnote}{*}
\footnotetext{Mihai Tib\u ar sadly passed away before the completion of this manuscript.}
\endgroup

\maketitle

\begin{abstract}
	We study bi-Lipschitz right-equivalence of holomorphic function germs $f:(\C^2,0)\to(\C,0)$ via polar arcs and gradient canyons. For a polar arc $\gamma$ we consider the Newton polygon of $f_x(X+\gamma(Y),Y)$ and define its augmentation by adjoining the point $(0,\ord f(\gamma(y),y)-1)$. We prove that the resulting augmented Newton polygon is constant along each gradient canyon of degree $>1$ and is invariant under bi-Lipschitz right-equivalence. Moreover, its compact edges decompose into a topological part and a Lipschitz part: the latter encodes, through simple intercept relations, the second-level Henry--Parusi\'nski type invariants. As applications, we obtain two numerical bi-Lipschitz invariants attached to a canyon: its polar multiplicity and, via the Koike--Kuo--P\u aunescu curvature formula, the total asymptotic Gaussian curvature concentrated in it.
\end{abstract}

\section{Introduction}

Let $f,g:(\C^2,0)\to(\C,0)$ be holomorphic function germs and assume that $f=g\circ\varphi$, where $\varphi:(\C^2,0)\to(\C^2,0)$ is a bi-Lipschitz homeomorphism. Determining which analytic features of $f$ survive such an equivalence is a central problem in the Lipschitz geometry of complex singularities. Henry and Parusi\'nski \cite{HP} discovered that the bi-Lipschitz equivalence of analytic function germs admits continuous moduli; in particular this classification is strictly finer than the topological one.

\medskip

In the 2--variable setting, polar curves and the behaviour of the gradient along holomorphic arcs provide an efficient language to organise such invariants. Building on the ``canyon'' viewpoint of \cite{PT} and on the subsequent clustering description of polar arcs and gradient canyons (and their mutual contacts) \cite{MPT}, one can associate to each gradient canyon $\cC$ a finite amount of discrete data, often referred to as an ``identity card'' (canyon degree, the order $h_{\cC}=\ord f(\gamma(y),y)$ and certain contact data inside appropriate clusters). More recently, a second-level family of higher Lipschitz invariants was introduced in \cite{MPT1}: given two polar arcs $\gamma,\gamma'$ with prescribed contact $\delta$ and equal order $\ord f(\gamma(y),y)=\ord f(\gamma'(y),y)$, one compares their normalised expansions and extracts an exponent $H$ (together with a coefficient difference at order $H$) which is bi-Lipschitz invariant under a natural inequality $H<h+\delta-1$ (Theorem~\ref{c:invar1}). 

\medskip

The goal of the present paper is to provide a Newton polygon interpretation of these higher invariants, and to package them into a single finite combinatorial object associated to each gradient canyon. For a polar arc $\gamma$ we consider the Newton polygon of $f_x$ relative to $\gamma$, i.e.\ the Newton polygon of $f_x(X+\gamma(Y),Y)$. If $h(\gamma):=\ord f(\gamma(y),y)$, we define the augmented Newton polygon by adding the point $(0,h(\gamma)-1)$ and taking the convex hull (Definition~\ref{def:augNP}). This augmentation is crucial in our approach: it leads to a polygon which is well-defined on each gradient canyon and admits a bi-Lipschitz invariance statement (Theorem~\ref{newton_inv}).  Example~\ref{ex:newt1} illustrates that $\widehat{\PP}(f_x,\cC)$ may jump in a family, hence it can detect bi-Lipschitz non-triviality.

\medskip

\noindent\textbf{Main result.}
For a gradient canyon $\cC=\GC(\gamma_*)$ of gradient degree $d_{\cC}>1$, the augmented Newton polygon is well-defined on $\cC$ (Lemma~\ref{lem:augNP_canyon}) and is a bi-Lipschitz invariant:
\[
\widehat{\PP}(f_x,\cC)\ \text{depends only on the bi-Lipschitz class of }f
\qquad\text{(Theorem~\ref{newton_inv}).}
\]
Conceptually, $\widehat{\PP}(f_x,\cC)$ splits into a topological and a Lipschitz part. The edges of co-slope $<\delta_{top}$ (where $\delta_{top}$ is the co-slope of the top edge of $\PP(f,\gamma)$) are topological invariants (Lemma~\ref{lem:NP-below-deltatop}). The remaining compact edges, whose co-slopes lie in $(\delta_{top},d_{\cC})$, are governed by the second-level invariants of \cite{MPT1}: if $E_\delta$ is such an edge and $\omega$ is the $Y$--intercept of its supporting line, then there exists a further polar arc $\gamma'$ with $\ord(\gamma-\gamma')=\delta$ and the associated canonical exponent satisfies $H_{\min}=\omega+\delta$ (in particular $H_{\min}<h_{\cC}+\delta-1$), see Corollary~\ref{cor:Hmin_equals_omega_plus_delta}. In this sense, our main theorem gives a concrete Newton polygon avatar of the higher invariants, turning them into a finite combinatorial object attached to each canyon.

\medskip

\noindent\textbf{Numerical corollaries: multiplicity and curvature of a canyon.} We distinguish the bar multiplicity $\mult^{\mathrm{bar}}(\cC)$ (Definition~\ref{def:bar-mult-canyon}), the number of polar representatives sharing a given trunk below order $d$, from the polar multiplicity $\mult(\cC)$ of a canyon in the sense of \cite{PT} (Definition~\ref{def:PT-mult-canyon}). The number $\mult^{\mathrm{bar}}(\cC)$ is the horizontal length of the top compact edge of $\widehat{\PP}(f_x,\cC)$ (Lemma~\ref{lem:mu_equals_length}), hence is bi-Lipschitz invariant (Corollary~\ref{cor:bar-mult}). The polar multiplicity factors as $\mult(\cC)=b_\cC\cdot\mult^{\mathrm{bar}}(\cC)$, where $b_\cC$ is the number of trunks at the canyon scale; its invariance follows from the canyon-disk correspondence, and this gives the invariance of $\mult(\cC)$ (Corollary~\ref{cor:polar-mult}). Combining this with the curvature formula of Koike--Kuo--P\u aunescu \cite{KKP}, we also obtain that the total asymptotic Gaussian curvature concentrated in a finite-degree canyon is a bi-Lipschitz invariant (Corollary~\ref{cor:curvature}).

\medskip

\noindent\textbf{Examples and organisation.}
Example~\ref{ex:newt} shows that $\widehat{\PP}(f_x,\cC)$, while bi-Lipschitz invariant, is not a complete invariant: it may remain constant in a family which is nevertheless not bi-Lipschitz trivial, as detected by the second-level invariants of \cite{MPT1}. Example~\ref{ex:newt1} gives a complementary picture: in a simple family the augmented Newton polygons (and the canyon degrees they encode) distinguish the special fibre $t=0$ from the generic fibres, hence the family is not bi-Lipschitz right-trivial. The paper is organised as follows. Section~\ref{s:prelim} recalls the needed background on Puiseux arcs, contact orders and gradient canyons. Section~\ref{s:newtonpoly} discusses Newton polygons relative to a polar arc and proves the topological stability of the part below $\delta_{top}$. Section~\ref{newton_interpret} proves the bi-Lipschitz invariance of the augmented Newton polygon. Section~\ref{s:multiplicity} introduces the bar multiplicity, relates it to the polar multiplicity of \cite{PT}, and proves the corresponding multiplicity invariance; Section~\ref{s:curvature} derives the curvature counterpart from the Koike--Kuo--P\u aunescu formula; Section~\ref{s:examples} contains the examples.

\section{Preliminaries}\label{s:prelim}

 Let $f,g:(\bC^2,0)\to (\bC,0)$ be holomorphic function germs such that $f=g\circ \varphi$, where $\varphi:(\bC^2,0)\to (\bC^2,0)$ is a bi-Lipschitz homeomorphism. 

Consider a holomorphic map germ 
\[
\alpha :(\bC,0)\to(\bC^2,0), \quad \alpha(t)=(z(t),w(t)) \not \equiv 0.
\]
The image set germ $\alpha_*=\im(\alpha)$ is a \emph{curve germ} at $0\in \bC^2$, also called a \emph{holomorphic arc} at $0$. There is a well-defined tangent line $T(\alpha_*)$ at $0$,   $T(\alpha_*)\in\bC P^1$. 

In order to introduce our results, we need to recall here several notations and facts from \cite{PT} and \cite{MPT}.

\medskip

Let $\F$ be the field of convergent fractional power series in an indeterminate $y$. By the Newton--Puiseux Theorem we have that $\F$ is algebraically closed, cf. \cite{BK}, \cite{Wa}.

A non-zero element of $\F$ has the form 
\begin{equation}\label{puis}
\eta(y)=a_0y^{n_0/N}+a_1y^{n_1/N}+a_2y^{n_2/N}+\cdots,\quad n_0<n_1<n_2<\cdots,
\end{equation}
where $a_i \in \bC^*$ and  $N,n_i\in \bN$ with $\gcd(N,n_0,n_1,\dots)=1$, $\lim \sup |a_i|^{\frac{1}{n_i}}<\infty$. The elements of $\F$ are called \emph{Puiseux arcs}. There are $N$ \emph{conjugates} of $\eta$ (including $\eta$ itself), which are the Puiseux arcs of the form
\[
\eta^{(k)}_{conj}(y):=\sum_{i\ge 0} a_i\varepsilon^{kn_i}y^{n_i/N}, \quad \varepsilon:=e^{\frac{2\pi\sqrt{-1}}{N}},
\]
where $k\in\{0,\dots ,N-1\}$.

By the \emph{order of a Puiseux arc} \eqref{puis} we mean $\ord \eta(y):=\frac{n_0}{N}$, and by the \emph{Puiseux multiplicity} we mean $m_{puiseux}(\eta)=N$, cf \cite{BK}, \cite{Wa}. 

Let $\F_1:=\{\eta \in \F \mid \ord \eta (y)\geq 1\}$.   
For any $\eta \in \F_1$ with $\ord \eta (y)\geq 1$, the following map germ:
\[
\eta_{par} :(\bC,0)\to(\bC^2,0), \quad t\mapsto (\eta(t^N),t^N), \quad N:=m_{puiseux}(\eta),
\]
is holomorphic, and all the conjugates of $\eta$ lead to the same irreducible curve $\im\eta_{par}$, which will be denoted by $\eta_*$.

\begin{definition}[Contact order of holomorphic arcs, cf.\ \cite{BK}, \cite{Wa}]\ \label{def:contact} \\
	The \emph{contact order} between two different holomorphic arcs $\alpha_*$ and $\beta_*$, where $\alpha(y),\beta(y) \in \F_1$ are Puiseux series representing these arcs, is defined as
	\[
	\max_{i,j} \ord\bigl(\alpha^{(i)}_{\mathrm{conj}}(y) - \beta^{(j)}_{\mathrm{conj}}(y)\bigr),
	\]
	where $\alpha^{(i)}_{\mathrm{conj}}$, $\beta^{(j)}_{\mathrm{conj}}$ run over all conjugates of $\alpha$ and $\beta$,
	respectively.
\end{definition}

\medskip

Let $f:(\bC^2,0)\rightarrow(\bC,0)$ be a holomorphic function germ, and let $m:= \ord_{0}f$. We say that $f$ is \emph{mini-regular in $x$ of order $m$},  if the initial form of the Taylor expansion of $f$ is not equal to $0$ at the point $(1,0)$, in other words $f_m(1,0)\neq 0$ where $f(x,y)=f_{m}(x,y)+f_{m+1}(x,y)+\hot$ is the homogeneous Taylor expansion of $f$.

Let $f:(\bC^2,0)\to (\bC,0)$ be a mini-regular holomorphic function in $x$. We have the following Puiseux factorisations of $f$, and of its derivative $f_x$ with respect to $x$:
\[
f(x,y)=u\cdot \prod_{i=1}^m(x-\zeta_i(y)), \quad f_x(x,y)=v\cdot \prod_{i=1}^{m-1}(x-\gamma_i(y)),
\]
where $u,v$ are units. \\ 
 If $f_x=g_1^{q_1} \cdots g_p^{q_p}$ is the decomposition into irreducible factors, then for each $i$ such that $g_i$ is not a factor of $f$ we denote by $\Gamma_i:=\{g_i=0\}$ the corresponding irreducible component of the polar curve. In particular, there exists at least one Puiseux root $\gamma$ of $f_x$ such that $g_i(\gamma(y),y)\equiv 0$.

 \medskip
 
 \begin{definition}[Polar arcs and their order]\label{def:polar-arc}
 	A \emph{polar arc} of $f$ is a Puiseux root $\gamma\in\F_1$ of $f_x$ that is not a root of $f$; that is, $f_x(\gamma(y),y)\equiv 0$ and $f(\gamma(y),y)\not\equiv 0$. Its image curve germ $\gamma_*$ (a holomorphic arc, in the sense recalled above) is also called a polar arc when this causes no confusion. Following \cite{PT}, whenever we speak of an ``arc'' we always mean a Puiseux arc.
 	
 	For a polar arc $\gamma$, the \emph{order of $f$ along $\gamma$} is
 	\[
 	h(\gamma):=\ord f(\gamma(y),y)\in\Q_{>0}.
 	\]
 \end{definition}

\medskip

\subsection{Gradient degree and gradient canyon.}\label{ss:degree}
Let $\gamma$ be a polar arc of $f$, thus such that $f(\gamma(y),y)\not\equiv 0$ and $f_x(\gamma(y),y) \equiv 0$. The \emph{gradient degree} $d(\gamma)$ is the smallest number $q$ such that 
\[
\ord\bigl(\|\grad f(\gamma(y),y)\|\bigr)=\ord\bigl(\|\grad f(\gamma(y)+uy^q,y)\|\bigr),
\]
holds for generic $u\in \bC$. This number is rational, $d(\gamma)\in\Q$. The \emph{gradient canyon} $\GC(\gamma_*)$ is the subset of all curve germs $\alpha_*$, where $\alpha$ is a Puiseux arc of the form 
\[
\alpha(y):=\gamma +uy^{d(\gamma)}+\hot
\]
for any $u\in \bC$. By \cite[Proposition~3.7(b)]{PT} all the polars contained in the same canyon have the same gradient degree, and therefore we may speak about the degree of the canyon (or simply the canyon degree).

\begin{remark}\label{rem:puiseux-canyons}
	The definition above is geometric. When working with fixed Puiseux representatives, we shall nevertheless use the same word ``canyon''. This causes no ambiguity for contacts $\delta<d$: under the bi-Lipschitz correspondence, contacts between polars below the canyon degree are preserved, as follows from \cite[Lemma~3.1]{MPT1}.
\end{remark}

\medskip

\subsection{The Kuo-Lu tree.}\label{ss:kuolu-tree}
For the convenience of the reader we recall the tree-model of $f$ from \cite[\S4]{kuo-lu} (there denoted $M(f)$), since it is used repeatedly below; we denote it by $T(f)$. Write the Puiseux factorisation $f=u\prod_{i=1}^m(x-\zeta_i(y))$, with $u$ a unit, and call $\ord(\zeta_i-\zeta_j)$ (for $i\neq j$) the \emph{order of contact} of the roots $\zeta_i,\zeta_j$. The tree is built as follows. One starts from a vertical \emph{main trunk}, labelled by the multiplicity $m=\ord f$. On top of it one draws a horizontal \emph{bar} whose height is the smallest order of contact $b_1:=\min_{i\neq j}\ord(\zeta_i-\zeta_j)$; the roots are then split into the groups within which any two roots have contact $>b_1$, and each group is carried by a vertical \emph{trunk} rising from the bar (labelled by the number of roots it carries). One repeats this construction on every trunk, obtaining higher bars and thinner trunks, until each trunk carries a single root; such a trunk is a \emph{twig}. Thus tracing the path from the main trunk to a twig identifies a root $\zeta_i$. For two roots $\zeta_i,\zeta_j$, their order of contact $\ord(\zeta_i-\zeta_j)$ equals the height of the bar at which the corresponding two twigs separate, that is, the highest bar lying on \emph{both} paths from the main trunk to these twigs (equivalently, the lowest bar on the path joining the two twigs).

Two facts will be used. First, by \cite[\S4]{kuo-lu}, the orders of contact between the Puiseux roots of $f$ and those of its derivative $f_x$ (i.e.\ the polar arcs) are entirely determined by $T(f)$: they can be read off from the auxiliary tree $M^*(f_x)$ constructed out of $T(f)$, whose twigs are in one-to-one correspondence with the polars. Second, $T(f)$ is a topological invariant of the germ $f$, being determined by the characteristic exponents of the branches of $f$ together with their pairwise intersection multiplicities, which are classical topological invariants of plane curve germs, cf.\ \cite{BK}; more precisely, a topological equivalence $f=g\circ \phi$, with $\phi$ a homeomorphism of $(\C^2,0)$, induces an isomorphism $T(f)\cong T(g)$ matching the Puiseux roots of $f$ with those of $g$ and preserving all their orders of contact.

Following \cite{PT,MPT}, a bar from which polar arcs $\gamma$ with $\ord f(\gamma(y),y)=h$ depart is denoted by $B(h)$; here the rational number $h$, rather than the Kuo-Lu height of the bar, is used as the label of the bar. The value $h$ is itself a topological invariant of $f$.

\subsection{The order $\ord f(\gamma(y),y)$.}\label{ss:order-h}
The order $h=\ord f(\gamma(y),y)$ of $f$ along a polar arc $\gamma$ (Definition~\ref{def:polar-arc}) is constant in the same canyon $\cC$, see e.g.\ \cite[Proposition~3.7(c)]{PT}; we may therefore denote it by $h_{\cC}$.

\medskip

\subsection{Clusters of canyons.}\label{ss:clusters}
Let $Z(f):=f^{-1}(0)=\{f=0\}$ denote the zero locus of $f$. Let $G_\ell(f)$ be the subset of canyons tangent to the line $\ell$ of the tangent cone $\Cone_0(f)$ at the origin of the curve $Z(f)$. Let then $G_{\ell,d,B(h)}(f)\subset G_\ell(f)$ denote the union of gradient canyons of a fixed degree $d>1$, the  polars of which grow on the same bar $B(h)$. 

\medskip 	

We write
\[
G_{\ell,d,B(h)}(f) = \{ \GC_1(f), \dots, \GC_r(f) \}
\]
for the finite family of all gradient canyons in this cluster; thus the index $i$ in $\GC_i(f)$
simply labels the distinct canyons.

\medskip

\subsection{Contact of canyons.}\label{ss:contact}

A fixed gradient canyon $\GC_i(f) \in G_{\ell,d,B(h)}(f)$ has an order of contact $k(i,j)$ with any other gradient canyon $\GC_j(f) \in G_{\ell,d,B(h)}(f)$ of the same cluster. We define $k(i,j)$ to be the contact order between some polar arc in $\GC_i(f)$ and some polar arc in $\GC_j(f)$ in the sense of Definition~\ref{def:contact}. By \cite[\S5.3]{PT}, whenever $\GC_i(f)$ and $\GC_j(f)$ are distinct canyons of the same degree $d>1$, this contact order is strictly smaller than $d$ and hence does not depend on the choice of the polars; in particular $k(i,j)$ is well defined.

The number $k(i,j)$ counts also the multiplicity of each such contact, that is, the number of canyons $\GC_j(f)$ of the cluster $G_{\ell,d,B(h)}(f)$ which have exactly the same contact with $\GC_i(f)$.

\medskip

\subsection{Sub-clusters of canyons.}\label{ss:subclusters}
Let $K_{\ell,d,B(h),i}(f)$ denote the (un-ordered) set of those contact orders $k(i,j)$ of the fixed canyon $\GC_i(f)$, counted with multiplicity.

Finally, let $G_{\ell,d,B(h),\omega}(f)$ be the union of canyons from $G_{\ell,d,B(h)}$ which have exactly the same set $\omega=K_{\ell,d,B(h),i}(f)$ of  orders of contact $\ge 0$ with the other canyons from $G_{\ell,d,B(h)}$. We then have a partition:
\[
G_{\ell,d,B(h)}(f)= \bigsqcup_{\omega} G_{\ell,d,B(h),\omega}(f).
\]

\medskip

\subsection{Lipschitz invariants and ``identity card''.}\label{ss:lipinvar}
By {\cite[Theorem~5.9]{PT} and \cite[Theorem~4.3]{MPT}}, for any degree $d>1$, any bar $B(h)$,  and any rational $h$, the following are bi-Lipschitz invariants:
\begin{enumerate}
	\item the cluster of canyons $G_{\ell,d,B(h)}$,
	\item the set of contact orders $K_{\ell,d,B(h),i}(f)$, and for each such set, the sub-cluster of canyons $G_{\ell,d,B(h),K_{\ell,d,B(h),i}}(f)$,
	\item the bi-Lipschitz homeomorphism preserves the contact orders between any two clusters of type $G_{\ell,d,B(h),K_{\ell,d,B(h),i}}(f)$.
\end{enumerate} 

\medskip

On the other hand, let $\cC \in G_\ell(f)$ be a canyon, and let $\gamma$ be some polar arc in $\cC$, where $\ell$ is in the tangent cone of $Z(f)$. If $d_{\cC}$ denotes the gradient degree of $\cC$, then
\[
f(\gamma(y),y)=a_{h_{\cC}}y^{h_\cC}+\hot
\]
by \cite[Proposition~3.7.]{PT},  where $a_{h_{\cC}}$ and  $h_{\cC}$ depend only on the canyon. 

The main theorem of \cite{HP}, completed by \cite{PT} {(see \cite[Theorem~4.4]{MPT})},  tells that we also have the following bi-Lipschitz invariant:

\begin{enumerate}
	\item[(d)] The effect of the bi-Lipschitz map $\varphi$ on each such couple {$(h_{\cC}, a_{h_{\cC}})$} is the identity on {$h_{\cC}$}, and the multiplication of $a_{h_{\cC}}$ by $c^{- h_{\cC}}$, where $c$ is a certain non-zero constant which is the same for all canyons $\cC \in G_\ell(f)$.
\end{enumerate}

We say that the above bunch of data, which yield the described bi-Lipschitz invariants, constitute the ``identity card'' of $f$.  
	
\medskip

\begin{convention}[The bi-Lipschitz correspondence of canyons]\label{conv:bilip}
	Throughout the paper, the phrase ``bi-Lipschitz invariant'' is understood as follows. Let $f=g\circ\varphi$, with $\varphi:(\C^2,0)\to(\C^2,0)$ a bi-Lipschitz homeomorphism and $f,g$ mini-regular in $x$. By \cite[Theorem~5.9]{PT} (built on \cite[Theorem~5.8]{PT}; see also \cite[Theorem~4.3]{MPT}), although $\varphi$ need not map a gradient canyon of $f$ onto a gradient canyon of $g$, it nevertheless induces a canonical bijection
	\[
	\Phi:\ \{\text{gradient canyons of }f\text{ of degree }>1\}\ \xrightarrow{\ \sim\ }\ \{\text{gradient canyons of }g\text{ of degree }>1\},
	\]
	obtained through the correspondence of canyon disks in the Milnor fibres. This bijection preserves the canyon degree, carries the cluster $G_{\ell,d,B(h)}(f)$ onto $G_{\ell,d,B(h)}(g)$ and the sub-cluster $G_{\ell,d,B(h),\omega}(f)$ onto $G_{\ell,d,B(h),\omega}(g)$, and preserves the orders of contact between canyons.
	
	Accordingly, a quantity $I_f(\cC)$ attached to each gradient canyon $\cC$ of $f$ of degree $>1$ is called a \emph{bi-Lipschitz invariant} if $I_f(\cC)=I_g\bigl(\Phi(\cC)\bigr)$ for every such $\cC$; a quantity $I_f(\cC,\cC')$ attached to a pair of canyons (with a prescribed order of contact) is called a bi-Lipschitz invariant if $I_f(\cC,\cC')=I_g\bigl(\Phi(\cC),\Phi(\cC')\bigr)$ for every such pair. For the leading data $(h_\cC,a_{h_\cC})$, which are defined only up to the global rescaling described in item~(d) above, the expression ``the effect of $\varphi$'' refers to that transformation rule (the identity on $h_\cC$, and multiplication of $a_{h_\cC}$ by $c^{-h_\cC}$, with $c\neq 0$ common to all canyons of $G_\ell(f)$); likewise, in the second-level statement below (Theorem~\ref{c:invar1}), ``the effect of $\varphi$ on the pair $(H,\tilde a-\tilde a')$'' means that the exponent $H=H(\cC,\cC')$ is preserved by $\Phi$ while $\tilde a-\tilde a'$ is multiplied by $c^{\,h-H}$.
	
	With this terminology, the statements of items~(a)--(d) above are to be read through $\Phi$; in particular, item~(c) means precisely that $\Phi$ preserves these contact orders. The same convention governs the later assertions that an augmented Newton polygon $\widehat{\PP}(f_x,\cC)$ (Theorem~\ref{newton_inv}), a second-level exponent $H_{\min}(\cC,\cC')$ (Corollary~\ref{cor:Hmin_equals_omega_plus_delta}), or a multiplicity of a canyon (Corollaries~\ref{cor:bar-mult} and~\ref{cor:polar-mult}) ``is a bi-Lipschitz invariant'': it takes equal values on $\cC$ and $\Phi(\cC)$, resp.\ on the pair $(\cC,\cC')$ and $(\Phi(\cC),\Phi(\cC'))$.
\end{convention}

\subsection{Higher invariants: second level}\label{ss:second-level}

In this section we recall the result from \cite[Corollary 3.5]{MPT1}.

We assume that there are at least two distinct gradient canyons contained in $G_\ell(f)$, $\GC(\gamma_{*})$ and $\GC(\gamma'_{*})$, of canyon degrees $d$ and $d'$, respectively. Let $\delta \ge 1$ denote the contact order between them.

In order to state the theorem, we now assume that the orders are equal: 
\[
\ord f(\gamma(y),y) = \ord f(\gamma'(y),y),
\]
 and we  introduce the following notation:

\begin{itemize}
\item $a$ is the coefficient of $y^{h}$ in the expansion of $f(\gamma(y),y)$,
\item $a'$ is the coefficient of $y^{h}$ in the expansion of $f(\gamma'(y),y)$, 
\item $H=\ord \bigl[\frac{1}{a}f(\gamma(y),y)-\frac{1}{a'}f(\gamma'(y),y) \bigr]$,
\item $\tilde{a}$ is the coefficient of $y^{H}$ in the expansion of $\frac{1}{a}f(\gamma(y),y)$,
\item $\tilde{a}'$ is the coefficient of $y^{H}$ in the expansion of $\frac{1}{a'}f(\gamma'(y),y)$.
\end{itemize}

With these notations, we have:
\begin{theorem}\label{c:invar1}
Let $f,g :(\bC^2,0)\to (\bC,0)$ be holomorphic functions such that $f=g\circ \varphi$, where $\varphi:(\bC^2,0)\to (\bC^2,0)$ is a bi-Lipschitz homeomorphism and $f$ is mini-regular in $x$. Let $h=\ord f(\gamma(y),y) = \ord f(\gamma'(y),y)$.
If 
 \begin{equation}\label{eq:H-cond}
  H<  h+\delta-1,
\end{equation}
 then the effect  of the bi-Lipschitz map $\varphi$ on the pair $\bigl( H, (\tilde{a}-\tilde{a}')\bigr)$ is the identity on $H$, and  $(\tilde{a}-\tilde{a}')$ multiplies by $c^{h-H}$.
\end{theorem}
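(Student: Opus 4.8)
The plan is to transport the two normalised branch values of $f$ along $\varphi$ by means of the first-level invariants recorded in the identity card of \S\ref{ss:lipinvar}, and to absorb all the errors produced by the non-algebraicity of $\varphi$ into the single numerical hypothesis $H<h+\delta-1$.

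The one genuinely new ingredient is a \emph{canyon-stability} estimate. Let $\bar\cC$ be a gradient canyon of $g$, let $\bar\gamma$ be a polar arc in it, put $d:=d_{\bar\cC}$, and let $\beta=\bar\gamma+w$ be any arc of $\bar\cC$, so that $\ord w\ge d$. Taylor-expanding $g$ in the first variable and using $g_x(\bar\gamma(Y),Y)\equiv 0$,
\[
g(\beta(Y),Y)=g(\bar\gamma(Y),Y)+\tfrac12\,g_{xx}(\bar\gamma,Y)\,w^2+\hot .
\]
The definition of the gradient degree forces $\ord g_{xx}(\bar\gamma(Y),Y)\ge h-1-d$ --- for generic $u$ the $x$-component of $\grad g(\bar\gamma+uY^d,Y)$ has order $\le\ord g_{xx}(\bar\gamma,Y)+d$, while $\|\grad g(\bar\gamma+uY^d,Y)\|$ has order $h-1$ --- so every correction term has order $\ge h+d-1$. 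Hence the expansions of $g$ along $\beta$ and along $\bar\gamma$ agree up to order $h+d-1$; in particular $g$ has order $h$ and leading coefficient $a_{\bar\cC}$ along every arc of $\bar\cC$.

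Next I would transport along $\varphi$. By \cite{HP}, \cite[Thm.~5.9]{PT} and \cite[Thms.~4.3 and 4.4]{MPT}, $\varphi$ carries the cluster $G_\ell(f)$ onto the corresponding cluster of $g$, preserving the canyon structure, the degrees and the orders $h$; thus $\cC=\GC(\gamma_*)$ and $\cC'=\GC(\gamma'_*)$ correspond to distinct canyons $\bar\cC,\bar\cC'$ of $g$ with the same contact $\delta$, and item~(d) furnishes one non-zero constant $c$ with $a_{\bar\cC}=c^{-h}a_\cC$, $a_{\bar\cC'}=c^{-h}a_{\cC'}$. Writing $\varphi(\gamma(y),y)=(\beta(Y(y)),Y(y))$ and $\varphi(\gamma'(y),y)=(\beta'(Y'(y)),Y'(y))$ with $\beta,\beta'$ arcs of $\bar\cC,\bar\cC'$, the identity $f(\gamma(y),y)=g(\beta(Y(y)),Y(y))$ together with the previous paragraph and $a_\cC=c^{h}a_{\bar\cC}$ forces, on comparing orders and leading coefficients, $\ord Y(y)=\ord Y'(y)=1$ with leading coefficient $c$ in both cases. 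Moreover, choosing conjugate representatives realising the contact gives $\ord(\gamma(y)-\gamma'(y))=\delta$, hence $\ord(Y(y)-Y'(y))\ge\delta$ by the Lipschitz inequality for $\varphi$. Finally, since distinct canyons satisfy $\delta<\min(d_\cC,d_{\cC'})$ --- otherwise $\gamma'_*\in\GC(\gamma_*)$, contradicting $\cC\ne\cC'$ in view of \cite[Prop.~3.7]{PT} --- the hypothesis $H<h+\delta-1$ guarantees that all the errors of order $\ge h+d-1$ above are of order $>H$.

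Now I would compare. Set $\bar F(Y)=\tfrac1{a_{\bar\cC}}g(\bar\gamma(Y),Y)$ and $\bar F'(Y)=\tfrac1{a_{\bar\cC'}}g(\bar\gamma'(Y),Y)$, both monic of order $h$; by construction $\bar H:=\ord(\bar F-\bar F')$ together with its leading coefficient is the pair $(H,\tilde a-\tilde a')$ attached to $g$ and to the ordered pair of canyons $(\bar\cC,\bar\cC')$. Combining the two preceding paragraphs,
\[
\tfrac1{a_\cC}f(\gamma(y),y)-\tfrac1{a_{\cC'}}f(\gamma'(y),y)=c^{-h}(\bar F-\bar F')(Y(y))+c^{-h}\bigl(\bar F'(Y(y))-\bar F'(Y'(y))\bigr)+(\text{order}>H).
\]
Because $\bar F'(Y)=Y^h+\hot$ and $\ord(Y(y)-Y'(y))\ge\delta$, the middle term has order $\ge h+\delta-1>H$, whereas the left-hand side has order $H$ by hypothesis; hence $(\bar F-\bar F')(Y(y))$ has order exactly $H$, and since $\ord Y(y)=1$ with leading coefficient $c$ this yields $\bar H=H$ and $\tilde a-\tilde a'=c^{H-h}(\tilde{\bar a}-\tilde{\bar a}')$, i.e.\ $\varphi$ fixes $H$ and multiplies $\tilde a-\tilde a'$ by $c^{\,h-H}$. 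I expect the real obstacle to lie not in these elementary computations but in the transport step --- establishing that $\varphi$ carries each polar arc into the corresponding canyon of $g$ with a \emph{common} constant $c$ and with $\ord Y(y)=1$, which is exactly the Henry--Parusi\'nski mechanism underlying item~(d) and which I would invoke as a black box.
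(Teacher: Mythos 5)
The paper does not actually prove Theorem~\ref{c:invar1}: Subsection~\ref{ss:second-level} explicitly states that it is being \emph{recalled} from \cite[Corollary~3.5]{MPT1}, so there is no in-paper proof to compare yours against line by line. That said, your outline reproduces what is, as far as the cited literature indicates, the intended Henry--Parusi\'nski-type mechanism: (i) a canyon-stability estimate showing $g$ agrees along any arc of $\bar\cC$ with its value along the polar up to order $h+d-1$ (your bound $\ord \partial_x^k g(\bar\gamma(Y),Y)\ge h-1-(k-1)d$ from the definition of the gradient degree is correct and is essentially \cite[Lemma~3.3 / Prop.~3.7]{PT}); (ii) transport of canyons, degrees, orders $h$ and contacts by $\varphi$ with a common constant $c$, quoted from \cite{HP}, \cite{PT}, \cite{MPT}; (iii) the three-term decomposition in which the cross term $\bar F'(Y(y))-\bar F'(Y'(y))$ has order $\ge h+\delta-1>H$, which is exactly where hypothesis \eqref{eq:H-cond} enters. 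The numerology at the end ($\bar H=H$, $\tilde a-\tilde a'$ scaling by $c^{H-h}$, i.e.\ the $g$-side invariant being $c^{h-H}$ times the $f$-side one) checks out, as does your remark that $\delta<\min(d_\cC,d_{\cC'})$ for distinct canyons.

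One step is stated more strongly than what is actually available, and it is precisely the step you black-box. You write $\varphi(\gamma(y),y)=(\beta(Y(y)),Y(y))$ with $\beta$ a holomorphic arc of $\bar\cC$, and then use the \emph{exact} identity $f(\gamma(y),y)=g(\beta(Y(y)),Y(y))$. A bi-Lipschitz image of a holomorphic arc is only a Lipschitz curve; the transport results give that $\varphi(\gamma(y),y)$ lies within distance $O(|y|^{d})$ of such an arc (and that $Y(y)=cy+o(|y|)$ only asymptotically, not as a Puiseux series). So the identity must be replaced by an approximation, with the error bounded by $\sup\|\grad g\|\cdot\dist$, which along the canyon is $O(|y|^{h-1+d})$ and hence of order $>H$ --- your own canyon-stability estimate supplies exactly the needed gradient bound, so the error is absorbable, but it has to be inserted explicitly, and all ``orders'' and ``leading coefficients'' of $Y(y)$, $Y(y)^{\bar H}$, etc.\ have to be read asymptotically. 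This, together with establishing the common constant $c$ and $\ord Y(y)=1$, is where essentially all of the content of \cite{HP}, \cite[Thm.~5.9]{PT} and \cite[Cor.~3.5]{MPT1} lives; granting those inputs, your reduction of the theorem to them is correct.
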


\section{Newton polygon and related results}\label{s:newtonpoly}
Let $f :(\bC^2,0)\to (\bC,0)$ be a holomorphic function germ, and let $\eta(y)=a_0y^{n_0/N}+a_1y^{n_1/N}+a_2y^{n_2/N}+\cdots,\quad n_0<n_1<n_2<\cdots$, where $a_i \in \bC^*$ and  $N,n_i\in \bN$ with $\gcd(N,n_0,n_1,\dots)=1$, $\limsup |a_i|^{\frac{1}{n_i}}<\infty$. The Newton polygon $\PP(f)$ is defined in the usual way, as follows. Let us write
\[
f(x, y):=\sum c_{ij}x^i y^{j}, \quad c_{ij}\neq 0, \quad (i,j)\in \bZ_{\ge 0}\times\bZ_{\ge 0}.
\]
A monomial term with $c_{ij}\neq 0$ is represented by a ``Newton dot'' at $(i,j)$. We shall simply call it a dot of $f$. The boundary of the convex hull generated by $\{ (i+u,j+v)\mid u,v\geq 0 \}$, for all dots $(i,j)$, is the Newton polygon $\PP(f)$. We write $\Supp(H):=\{(i,j)\in\mathbb Z_{\ge0}\times\mathbb Q_{\ge0}\mid c_{ij}\neq 0\}$ for a (possibly fractional) series $H(X,Y)=\sum c_{ij}X^iY^j$.

For a compact edge of a Newton polygon or, more generally, for a line in $\R^2$ through $(u,0)$ and $(0,v)$ with $u,v>0$, we call $v/u$ its \emph{co-slope}. For a weight $\nu=[a,b]$ with $a,b>0$ we set $\nu(X^iY^j):=ai+bj$ and, for a (possibly fractional) series $G=\sum c_{ij}X^iY^j$, we write $\ord_\nu(G):=\min\{\nu(X^iY^j)\mid c_{ij}\neq 0\}$ and denote by $\In_\nu(G)$ its \emph{$\nu$-initial form}, namely the sum of the terms attaining this minimum. The compact edge of $\PP(G)$ exposed by the weight $\nu=[a,b]$ has co-slope $a/b$.

Now let us change variables (formally):
\[
X:=x-\eta(y), \quad Y:=y,\quad F(X,Y):=f(X+\eta(Y),Y).
\]
The Newton polygon $\PP(F)$ is called the Newton polygon of $f$ relative to $\eta$, denoted by $\PP(f,\eta)$; this is the Newton polygon relative to an arc in the sense of Kuo and Parusi\'nski \cite{KP}. Note that for Puiseux $\eta$ the series $F(X,Y)$ may involve fractional powers of $Y$; we use the same definition of the Newton polygon for such series, allowing exponents in $\bQ_{\ge 0}$.

\subsection{Gradient degree}
Next, we will need a construction from \cite{PT}. In this subsection we assume that $\eta=\gamma$ is a polar arc of $f$, and we keep the notation $F(X,Y)=f(X+\gamma(Y),Y)$. Set $h:=\ord_Y F(0,Y)=\ord f(\gamma(y),y)$. Let $E_{top}$ denote the edge of $\PP(F)$ whose left vertex is $(0,h)$ and whose right vertex is $(m_{top}, q_{top})$. We call it the \emph{top edge} and denote its co-slope by $\delta_{top}$. Since $f(\gamma(y),y)\not \equiv 0$ and $f_x(\gamma(y),y)\equiv 0$, the top edge $E_{top}$ has its left endpoint on the line $X=0$ and there are no Newton dots on the vertical line $X=1$.

\medskip

Let $(\hat{m}_{top},\hat{q}_{top})\neq (0,h)$ be the dot of $F$ on $E_{top}$ which is closest to the left end of $E_{top}$ (which is $(0,h)$). Of course $(\hat{m}_{top},\hat{q}_{top})$ may coincide with $(m_{top},q_{top})$. Then, clearly,
\[
2\leq \hat{m}_{top}\leq m_{top}, \quad \frac{h-\hat{q}_{top}}{\hat{m}_{top}}=\frac{h-q_{top}}{m_{top}}=\delta_{top}.
\]

Now we draw a line $L$ through $(1, h-1)$ with the following two properties:
\begin{itemize}
\item if $(m',q')$ is a dot of $F_X$, then $(m'+1,q')$ lies on or above $L$;
\item there exists a dot $(m^*, q^*)$ of $F_X$ such that $(m^*+1,q^*)\in L$. (Of course, $(m^*+1, q^*)$ may coincide with $(\hat{m}_{top}, \hat{q}_{top})$.)
\end{itemize}
In \cite[Lemma~3.2]{PT}, it is shown that the co-slope of $L$ is equal to the degree $d:=d(\gamma)$. The above construction is shown in Figure~\ref{fig:3} (note that in this case $(\hat{m}_{top},\hat{q}_{top})=(m_{top},q_{top})$).

\medskip

\begin{figure}[h]
\centering
\begin{tikzpicture}[scale=0.8]
    % Axes
    \draw[->] (-0.5, 0) -- (5, 0) ;
    \draw[->] (0, -0.5) -- (0, 8.8) ;
    
    % Grid
    \foreach \x in {1,2,3,4} {
        \draw[gray, dashed] (\x, 0) -- (\x, 8.3);
    }
    \foreach \y in {1,2,3,4,5,6,7,8} {
        \draw[gray, dashed] (0, \y) -- (4.5, \y);
    }

    % Convex Hull
    \draw[thick] (0, 7.5) -- (3, 0) node[pos=0.5, below left] {$E_{top}$};
   % \draw[thick] (4, 0) --(3,1);
    \draw[thick] (2, 3) --(3,0);
    \draw[thick, dotted] (2, 3) --(1,6.5) node[pos=0.5, above right] {$L$};
    % Points
    \fill[black] (0, 7.5) circle (3pt) node[left] {$(0,h)$};
    \fill[black] (1, 6.5) circle (3pt) node[right] {$(1,h-1)$};
    \fill[black] (2, 3) circle (3pt) node[right]{$(m^*+1,q^*)$};
    \fill[black] (3, 0) circle (3pt) node[below] {$(\hat{m}_{top}, \hat{q}_{top})$};
    %\fill[black] (4, 0) circle (3pt);

\end{tikzpicture}
\caption{Construction of the line $L$ whose co-slope equals the gradient degree.}
    \label{fig:3}
\end{figure}
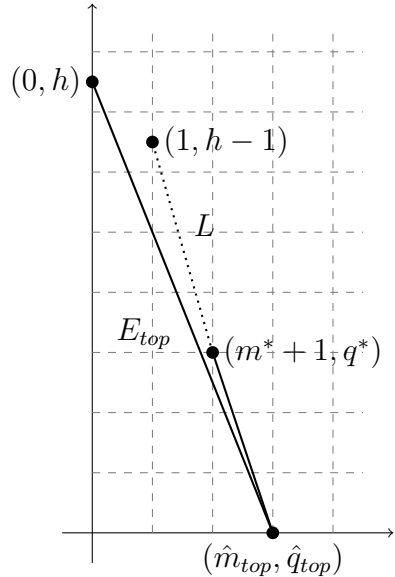

\subsection{Auxiliary lemma}

\begin{lemma}\label{lem:NP-below-deltatop}
	Let $f:(\C^2,0)\to(\C,0)$ be mini-regular in $x$, and let $\gamma$ be a polar arc of $f$. Let $\delta_{top}$ denote the co-slope of the top edge of $\PP(f,\gamma)$. Then the part of $\PP(f,\gamma)$ consisting of all compact edges whose co-slope is $<\delta_{top}$ (with their co-slopes and horizontal lengths) is a topological invariant of $f$ (i.e.\ it depends only on the embedded topological type of $f$).
	\footnote{In the real-analytic category, a bi-Lipschitz analogue of this statement, the invariance of the compact edges of the Newton polygon relative to an arc (the \emph{Newton boundary} in the terminology of \cite{KoP}), was proved by Koike and Parusi\'nski \cite[Corollary~2.7]{KoP}; their argument does not depend on the ground field and hence applies over $\C$ as well. We thank A.~Parusi\'nski for this observation.}
	
\end{lemma}

\begin{proof}
	Write the Puiseux factorisation
	\[
	f(x,y)=u(x,y)\prod_{i=1}^m\bigl(x-\zeta_i(y)\bigr),
	\]
	where $u$ is a unit and the $\zeta_i$ are Puiseux roots of $f$.  Set
	\[
	F(X,Y):=f\bigl(X+\gamma(Y),Y\bigr)
	=u(X+\gamma(Y),Y)\prod_{i=1}^m\bigl(X+\gamma(Y)-\zeta_i(Y)\bigr),
	\]
	so that $\PP(f,\gamma)$ is the Newton polygon of $F$.
	
	\medskip\noindent
	\emph{Step 1: contacts with a root under the polar.}
	By the description of polar arcs in terms of the Kuo-Lu tree, there exists a Puiseux root $\alpha$ of $f$ such that $\gamma$ grows from the trunk of $\alpha$ at a bar of height $\delta_{top}$; in particular
	\[
	\ord\bigl(\gamma(y)-\alpha(y)\bigr)=\delta_{top},
	\]
	and $\gamma$ and $\alpha$ have the same Puiseux coefficients in all terms of order $<\delta_{top}$, cf.\ \cite{kuo-lu}.  
	
	Fix a common denominator for the Puiseux series of $\alpha$, $\gamma$ and all $\zeta_i$.  For any Puiseux root $\zeta$ of $f$ with
	\[
	c:=\ord\bigl(\alpha(y)-\zeta(y)\bigr)<\delta_{top},
	\]
	we then have
	\begin{equation}\label{eq:contact-stability}
	\ord\bigl(\gamma(y)-\zeta(y)\bigr)
	=\ord\bigl(\alpha(y)-\zeta(y)\bigr)=c.
	\end{equation}
	Indeed, the difference $\gamma-\alpha$ has order exactly $\delta_{top}$, so all terms of order $<\delta_{top}$ coincide in $\gamma$ and $\alpha$, and the leading term of $\alpha-\zeta$ is of order $c<\delta_{top}$ and is not affected by adding $\gamma-\alpha$.
	
	Denote
	\[
	c_i:=\ord\bigl(\gamma(y)-\zeta_i(y)\bigr),\qquad i=1,\dots,m.
	\]
	By \eqref{eq:contact-stability}, the sub-multiset $\{c_i\mid c_i<\delta_{top}\}$ coincides with the sub-multiset
	\[
	\bigl\{\ord\bigl(\alpha-\zeta_i\bigr):\ \ord(\alpha-\zeta_i)<\delta_{top}\bigr\}.
	\]
	
	\medskip\noindent
	\emph{Step 2: the part of $\PP(f,\gamma)$ below $E_{top}$ depends only on the $c_i<\delta_{top}$.}
	Write
	\[
	H_i(Y):=\zeta_i(Y)-\gamma(Y), \qquad \ord H_i=c_i.
	\]
	Then
	\[
	F(X,Y)=u(X+\gamma(Y),Y)\prod_{i=1}^m\bigl(X-H_i(Y)\bigr).
	\]	
	The Newton polygon of a product is the Minkowski sum of the Newton polygons of the factors.  Since $u$ is a unit, its Newton polygon consists of the single point $(0,0)$, so taking the Minkowski sum with $\PP(u)$ does not change the Newton polygon.  In particular, we may ignore the factor $u(X+\gamma(Y),Y)$ when computing $\PP(f,\gamma)$, and $\PP(f,\gamma)$ is completely determined by the factors $(X-H_i(Y))$.

	For each factor $X-H_i(Y)$ the only Newton dots that can lie on the lower boundary are $(1,0)$ (coming from $X$) and $(0,c_i)$ (coming from the leading term of $H_i$); all other dots lie strictly above the segment joining these two points.  Hence the Newton polygon of $X-H_i(Y)$ has a unique compact edge of co-slope $c_i$ and horizontal length~$1$.
	
	Consequently, the Newton polygon $\PP(f,\gamma)$ is the Minkowski sum of these segments, and its compact edges (with their co-slopes and horizontal lengths) are determined solely by the multiset $\{c_i\}_{i=1}^m$.  Moreover, any edge of co-slope $<\delta_{top}$ arises from those $c_i$ which are $<\delta_{top}$, and the total horizontal length of all edges of a fixed co-slope $q<\delta_{top}$ equals the number of indices $i$ with $c_i=q$.
	
	Thus the part of $\PP(f,\gamma)$ consisting of edges of co-slope $<\delta_{top}$ is completely determined by the multiset $\{c_i\mid c_i<\delta_{top}\}$.
	
	\medskip\noindent
	\emph{Step 3: topological invariance of the contacts.}
	By Step~2, the configuration of compact edges of $\PP(f,\gamma)$ with co-slope $<\delta_{top}$, together with their co-slopes and horizontal lengths, is completely determined by the multiset
	\[
	\{c_i\mid c_i<\delta_{top}\}
	=\Bigl\{\ord\bigl(\gamma(y)-\zeta_i(y)\bigr)\mid \ord\bigl(\gamma(y)-\zeta_i(y)\bigr)<\delta_{top}\Bigr\}.
	\]
	By Step~1, for every $c_i<\delta_{top}$ we have
	\[
	c_i=\ord\bigl(\gamma(y)-\zeta_i(y)\bigr)
	=\ord\bigl(\alpha(y)-\zeta_i(y)\bigr),
	\]
	where $\alpha$ is a Puiseux root of $f$ such that $\gamma$ grows from the trunk of $\alpha$ at a bar of height $\delta_{top}$ in the Kuo-Lu tree (see \cite{kuo-lu}). Hence the above multiset coincides with
	\[
	\Bigl\{\ord\bigl(\alpha(y)-\zeta_i(y)\bigr)\mid \ord\bigl(\alpha(y)-\zeta_i(y)\bigr)<\delta_{top}\Bigr\}.
	\]
	
	These orders of contact are encoded in the Kuo-Lu tree $T(f)$: for two Puiseux roots $\alpha,\zeta_i$, the order of contact $\ord(\alpha-\zeta_i)$ equals the height of the bar at which the corresponding twigs separate in $T(f)$, cf.\ Subsection~\ref{ss:kuolu-tree}. In particular, the multiset
	\[
	\Bigl\{\ord\bigl(\alpha(y)-\zeta_i(y)\bigr)\mid \ord\bigl(\alpha(y)-\zeta_i(y)\bigr)<\delta_{top}\Bigr\}
	\]
	can be completely read off from the tree $T(f)$.
	
	On the other hand, as shown in \cite{kuo-lu}, the Kuo-Lu tree $T(f)$ is a topological invariant of the function germ $f$.  In particular, the above multiset of contact orders, and hence the multiset $\{c_i\mid c_i<\delta_{top}\}$, are topological invariants of $f$.
	
	\medskip
	
	Combining Step~2 and Step~3, we obtain that the part of the Newton polygon $\PP(f,\gamma)$ consisting of edges with co-slope $<\delta_{top}$, together with their co-slopes and horizontal lengths, is a topological invariant of $f$. This proves Lemma~\ref{lem:NP-below-deltatop}.	
\end{proof}

\begin{remark}\label{rem:topcorrespondence}
	Lemma~\ref{lem:NP-below-deltatop} has the following meaning for a topological equivalence $f=g\circ\phi$, with $\phi$ a homeomorphism. The induced isomorphism $T(f)\cong T(g)$ preserves $\delta_{top}$ and matches $\gamma$ with a polar arc $\gamma'$ of $g$ whose part of $\PP(g,\gamma')$ below $\delta_{top}$ coincides with that of $\PP(f,\gamma)$. This matching is purely combinatorial: $\phi$ need not send polar arcs to polar arcs, but the twig of $M^*(f_x)$ carrying $\gamma$ corresponds to a twig of $M^*(g_x)$, and any polar of $g$ on it may serve as $\gamma'$.
\end{remark}

\section{The impact of the bi-Lipschitz equivalence on the Newton polygon $\PP(f_x,\gamma)$}\label{newton_interpret}

In this section we describe the consequences of Theorem~\ref{c:invar1}. We adopt the notation introduced in Subsection~\ref{ss:second-level}.

\begin{definition}\label{def:augNP}
	Let $f:(\C^2,0)\to(\C,0)$ be mini-regular in $x$ and let $\gamma$ be a polar arc of $f$. Set $h(\gamma):=\ord f(\gamma(y),y)$. We define the \emph{augmented Newton polygon of $f_x$ along $\gamma$} by
	\[
	\widehat{\PP}(f_x,\gamma)
	:=\operatorname{Conv}\Bigl(\PP(f_x,\gamma)\cup\{(0,h(\gamma)-1)\}\Bigr)\subset\R^2.
	\]
\end{definition}

\medskip

Directly from the definition of a gradient canyon and from \cite[Proposition~3.7(b)]{PT}, we obtain the following.

\medskip 

\begin{lemma}[Well-definedness on a gradient canyon]\label{lem:augNP_canyon}
	Let $f:(\C^2,0)\to(\C,0)$ be mini-regular in $x$, and let $\cC=\GC(\gamma_*)$ be a gradient canyon of degree $d:=d_{\cC}$. If $\gamma_1,\gamma_2$ are polar arcs contained in $\cC$, then
	\[
	\widehat{\PP}(f_x,\gamma_1)=\widehat{\PP}(f_x,\gamma_2).
	\]
	In particular, $\widehat{\PP}(f_x,\gamma)$ depends only on the canyon $\cC$, and we may denote it by $\widehat{\PP}(f_x,\cC)$.
\end{lemma}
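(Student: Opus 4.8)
The plan is to reduce the statement to two facts: that the part of $\PP(f_x,\gamma)$ coming from the polar branches \emph{outside} $\cC$ is stable as $\gamma$ varies in $\cC$, and that the augmentation by the point $(0,h_\cC-1)$ swallows precisely the part coming from the branches \emph{inside} $\cC$ — which is the only part that can depend on the chosen polar arc.

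First I would dispose of the reductions. Since $\gamma_1,\gamma_2\in\cC$, one has $h(\gamma_1)=h(\gamma_2)=h_\cC$ (\S\ref{ss:order-h}, \cite[Proposition~3.7]{PT}) and $d(\gamma_1)=d(\gamma_2)=d_\cC=:d$ (\S\ref{ss:degree}); hence the adjoined point $P:=(0,h_\cC-1)$ is the same for both, and it is enough to prove $\operatorname{Conv}(\PP(f_x,\gamma_1)\cup\{P\})=\operatorname{Conv}(\PP(f_x,\gamma_2)\cup\{P\})$. Using the Puiseux factorisation $f_x=v\prod_{i=1}^{m-1}(x-\gamma_i)$ with $v$ a unit, we get $f_x(X+\gamma_k(Y),Y)=(\mathrm{unit})\prod_i\bigl(X-(\gamma_i-\gamma_k)\bigr)$, so $\PP(f_x,\gamma_k)$ is the Minkowski sum of the elementary polygons $\Delta\bigl(\ord(\gamma_k-\gamma_i)\bigr)$, where $\Delta(c)=\operatorname{Conv}\{(1,0),(0,c)\}+\R^2_{\ge0}$ for $0<c<\infty$ and $\Delta(\infty)=\{(1,0)\}+\R^2_{\ge0}$. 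Split the polar roots into $A=\{i:\ord(\gamma_k-\gamma_i)<d\}$ and $B=\{i:\ord(\gamma_k-\gamma_i)\ge d\}$; the branches indexed by $B$ are exactly those lying in $\cC$. Because $\ord(\gamma_1-\gamma_2)\ge d$, the ultrametric inequality for orders (as in Step~1 of the proof of Lemma~\ref{lem:NP-below-deltatop}) shows that $A$ and $B$, and the orders $\ord(\gamma_k-\gamma_i)$ for $i\in A$, do not depend on $k$; equivalently the compact edges of $\PP(f_x,\gamma_k)$ of co-slope $<d$, with their co-slopes and horizontal lengths, are independent of $k$. This is, in substance, \cite[Lemma~3.4(b)]{PT}.

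The heart of the argument is the effect of the augmentation, and here I would use the line $L$ of \cite[Lemma~3.2]{PT}: the line $\ell_0$ through $P=(0,h_\cC-1)$ of co-slope $d$ is a supporting line of $\PP(f_x,\gamma_k)$ which meets it. Additivity of support functions over Minkowski sums gives $\min_{(i,j)\in\PP(f_x,\gamma_k)}(j+di)=\sum_i\min\bigl(d,\ord(\gamma_k-\gamma_i)\bigr)=\sum_{i\in A}\ord(\gamma_k-\gamma_i)+d\,|B|$, and since $\ell_0$ passes through $P$ and meets $\PP(f_x,\gamma_k)$ this minimum equals $h_\cC-1$. Consequently $\ell_0$ is tangent to $\PP(f_x,\gamma_k)$ along its ``co-slope $\ge d$ part'', whose right endpoint is $W_k=\bigl(\,|B|,\ \sum_{i\in A}\ord(\gamma_k-\gamma_i)\,\bigr)$; this point lies on $\ell_0$ and, by the previous paragraph, is a canyon invariant (its first coordinate being the number of polar branches of $f_x$ in $\cC$, counted with multiplicity). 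Forming $\operatorname{Conv}(\PP(f_x,\gamma_k)\cup\{P\})$, the segment $[P,W_k]$ of co-slope $d$ absorbs the \emph{entire} co-slope $\ge d$ part of $\PP(f_x,\gamma_k)$ — whatever it looks like — so the boundary of $\widehat{\PP}(f_x,\gamma_k)$ becomes: the segment $[P,W_k]$; then the co-slope $<d$ edges of $\PP(f_x,\gamma_k)$, running from $W_k$ down to the line $Y=0$ (using that $\PP(f_x,\gamma_k)$ has minimal $Y$-coordinate $0$, since $f$ is mini-regular in $x$); then the horizontal ray. As $h_\cC$, $d$, $W_k$ and the co-slope $<d$ part are all independent of $k$, we conclude $\widehat{\PP}(f_x,\gamma_1)=\widehat{\PP}(f_x,\gamma_2)$.

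The hard part is exactly this last step: locating the tangent point $W_k$ and recognising that the co-slope $\ge d$ portion of $\PP(f_x,\gamma_k)$ — the only portion that genuinely depends on the choice of polar arc inside $\cC$ — is precisely what taking the convex hull with $P$ erases. The rest (the reductions and the $k$-independence below co-slope $d$) is routine; the combinatorics of the Minkowski sum, fed by \cite[Lemma~3.2]{PT} and \cite[Lemma~3.4(b)]{PT}, is what carries the proof.
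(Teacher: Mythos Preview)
Your proof is correct and follows the same route the paper indicates: the paper simply records that the lemma follows ``directly from the definition of a gradient canyon and from \cite[Lemma~3.4(b)]{PT}'', and your argument is a careful unpacking of exactly those two inputs (together with \cite[Lemma~3.2]{PT} for the supporting line $\ell_0$), carried out via the Minkowski-sum description of $\PP(f_x,\gamma_k)$. In particular your identification of the absorbed co-slope $\ge d$ part and of the stable vertex $W_k=(|B|,\sum_{i\in A}c_i)$ is precisely what makes the one-line citation work.
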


\medskip

\begin{lemma}\label{lem:h-and-a-stable}
	Let $f:(\C^2,0)\to(\C,0)$ be mini-regular in $x$, and let $\gamma,\gamma'$ be polar arcs of $f$. Let $\delta_{top}$ be the co-slope of the top edge $E_{top}$ of $\PP(f,\gamma)$. Assume that
	\[
	\delta:=\ord\bigl(\gamma(y)-\gamma'(y)\bigr)>\delta_{top}.
	\]
	Write
	\[
	f(\gamma(y),y)=a\,y^{h}+\hot,\qquad
	f(\gamma'(y),y)=a'\,y^{h'}+\hot,
	\]
	with $a,a'\in\C^*$ and $h,h'\in\Q_{>0}$. Then $h'=h$ and $a'=a$.
\end{lemma}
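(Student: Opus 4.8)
The plan is to transfer everything to $F(X,Y):=f(X+\gamma(Y),Y)$, whose Newton polygon is $\PP(f,\gamma)$. Setting $\psi(Y):=\gamma'(Y)-\gamma(Y)$ we have $\ord\psi=\delta$ and
\[
f(\gamma'(y),y)=F\bigl(\psi(Y),Y\bigr),
\]
so that $h'$ and $a'$ are exactly the order and leading coefficient of the one-variable series obtained by substituting $X=\psi(Y)$ in $F$. The whole point will be to locate that lowest-order term on $\PP(F)$ and to check that it does not cancel.

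First I would record that $(0,h)$ is a Newton dot of $F$ with coefficient exactly $a$: indeed $c_{0j}$ is the coefficient of $Y^j$ in $F(0,Y)=f(\gamma(Y),Y)=a\,Y^{h}+\hot$, so $c_{0h}=a\neq 0$, and $(0,h)$ is the left vertex of $E_{top}$. Then comes the geometric heart of the argument. The supporting line of $E_{top}$ is $\{(i,j):\ \delta_{top}\,i+j=h\}$, since it passes through $(0,h)$ with co-slope $\delta_{top}$ and through the right vertex $(m_{top},q_{top})$, where $\delta_{top}\,m_{top}+q_{top}=h$. Because $\PP(F)$ is the convex hull of $\bigcup_{(i,j)\in\Supp(F)}\bigl((i,j)+\R_{\ge0}^2\bigr)$ and $E_{top}$ is one of its edges, every dot $(i,j)\in\Supp(F)$ lies in the half-plane $\delta_{top}\,i+j\ge h$, with equality exactly for the dots lying on $E_{top}$. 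I would then split according to $i$: if $i=0$, then $j\ge h$ by definition of $h$, with equality only at $(0,h)$; if $i\ge 1$, then, using $\delta>\delta_{top}$,
\[
\delta\,i+j=i(\delta-\delta_{top})+(\delta_{top}\,i+j)\ \ge\ i(\delta-\delta_{top})+h\ >\ h .
\]
Hence $\min_{(i,j)\in\Supp(F)}(\delta\,i+j)=h$, and this minimum is attained \emph{only} at $(0,h)$.

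Finally, with $F(X,Y)=\sum c_{ij}X^iY^j$ and $\psi(Y)=b\,Y^{\delta}+\hot$ where $b\neq 0$, the contribution of a dot $(i,j)$ to $F(\psi(Y),Y)$ is $c_{ij}\psi(Y)^iY^j$, a series of order exactly $\delta\,i+j$ with leading coefficient $c_{ij}b^i\neq 0$. By the previous step every contribution other than that of $(0,h)$ has order strictly bigger than $h$, so no cancellation is possible at order $h$, and $f(\gamma'(y),y)=F(\psi(Y),Y)=c_{0h}Y^{h}+\hot=a\,Y^{h}+\hot$; therefore $h'=h$ and $a'=a$. The hard part is really the previous paragraph: everything rests on $(0,h)$ being the \emph{unique} dot of $F$ lying on or below the line of co-slope $\delta$ through it, and this is precisely where the strict inequality $\delta>\delta_{top}$ (as opposed to $\delta\ge\delta_{top}$) is used, via convexity of the Newton polygon. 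Once that uniqueness is established, the absence of cancellation, and hence the conclusion, is immediate.
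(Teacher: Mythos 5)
Your argument is correct, but it follows a genuinely different route from the paper's. The paper's proof factorises $f(x,y)=u(x,y)\prod_{j}(x-\zeta_j(y))$, uses the identification $\delta_{top}=\max_j\ord(\gamma-\zeta_j)$, and observes that adding $t:=\gamma'-\gamma$ of order $\delta>\delta_{top}$ perturbs neither the order nor the leading coefficient of any factor $\gamma-\zeta_j$; multiplying the factors (and evaluating the unit at the origin) then yields $h'=h$ and $a'=a$. You instead stay entirely at the level of the series $F(X,Y)=f(X+\gamma(Y),Y)$ and run a weighted-valuation argument for the weight $[\delta,1]$: convexity of $\PP(F)$ gives $\delta_{top}\,i+j\ge h$ on $\Supp(F)$, and the strict inequality $\delta>\delta_{top}$ upgrades this to $\delta\,i+j>h$ for all dots with $i\ge 1$, so $(0,h)$ is the unique minimiser and the substitution $X=\psi(Y)$ cannot cancel the term $a\,Y^h$. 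Both proofs are sound and of comparable length. Yours is somewhat more self-contained, since it does not rely on the unproved identification of $\delta_{top}$ with the maximal root contact, and it uses exactly the initial-form mechanism that reappears in Corollary~\ref{cor:Hmin_equals_omega_plus_delta} and Lemma~\ref{lem:mu_equals_length}, so it fits the paper's toolbox naturally. The paper's factorisation argument yields slightly more information en route --- it shows that every individual contact order and leading coefficient of $\gamma'-\zeta_j$ is preserved, not merely their product --- but that surplus is not needed for the statement. One cosmetic remark: when you invoke the finiteness needed to make $\ord F(\psi(Y),Y)$ well defined, it is worth noting explicitly that for fixed $M$ only finitely many $(i,j)\in\Supp(F)$ satisfy $\delta\,i+j\le M$, since the exponents $j$ range over $\tfrac1N\Z_{\ge0}$ for a fixed denominator $N$; this is standard and does not affect correctness.
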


\begin{proof}
	Since $f$ is mini-regular in $x$, we may write the Puiseux factorisation
	\[
	f(x,y)=u(x,y)\prod_{j=1}^{m}\bigl(x-\zeta_j(y)\bigr),
	\]
	where $u$ is a unit and the $\zeta_j$ are the Puiseux roots of $f$ (counted with multiplicity). Set
	\[
	c_j:=\ord\bigl(\gamma(y)-\zeta_j(y)\bigr).
	\]
	Since $\delta_{top}$ is the co-slope of the top edge of $\PP(f,\gamma)$, we have $\max_j c_j=\delta_{top}$ (cf.\ Section~\ref{s:newtonpoly}).

	Now write $\gamma'(y)=\gamma(y)+t(y)$ with $\ord t=\delta>\delta_{top}$. Fix $j\in\{1,\dots,m\}$ and consider
	\[
	\gamma'(y)-\zeta_j(y)=\bigl(\gamma(y)-\zeta_j(y)\bigr)+t(y).
	\]
	Since $\ord(\gamma-\zeta_j)=c_j\le \delta_{top}<\delta=\ord t$, adding $t(y)$ does not change either the order or the leading coefficient of the Puiseux arc $\gamma(y)-\zeta_j(y)$. Hence
	\[
	\ord\bigl(\gamma'(y)-\zeta_j(y)\bigr)=\ord\bigl(\gamma(y)-\zeta_j(y)\bigr)=c_j,
	\]
	and the leading coefficient of $\gamma'(y)-\zeta_j(y)$ equals that of $\gamma(y)-\zeta_j(y)$.
	
	Therefore,
	\[
	\ord f(\gamma'(y),y)
	=\ord u(\gamma'(y),y)+\sum_{j=1}^m \ord(\gamma'(y)-\zeta_j(y))
	=0+\sum_{j=1}^m c_j
	=\ord f(\gamma(y),y),
	\]
	so $h'=h$.
	
	Moreover, the coefficient of $y^{h}$ in $f(\gamma(y),y)$ is
	\[
	a=u(0,0)\cdot \prod_{j=1}^m \bigl(\text{leading coefficient of }(\gamma-\zeta_j)\bigr),
	\]
	and the analogous formula holds for $a'$ with $\gamma'$ in place of $\gamma$. Since the leading coefficients of all factors $(\gamma'-\zeta_j)$ coincide with those of $(\gamma-\zeta_j)$, we get $a'=a$.
\end{proof}

\medskip

\medskip
Given two gradient canyons $\cC=\GC(\gamma_*)$ and $\cC'=\GC(\gamma'_*)$ such that $\ord f(\gamma(y),y)=\ord f(\gamma'(y),y)=h$, set $\delta:=\ord(\gamma-\gamma')$. For polar arcs $\alpha_*\subset\cC$ and $\alpha'_*\subset\cC'$ with $\ord(\alpha-\alpha')=\delta$, let $H(\alpha,\alpha')$ be as in Subsection~\ref{ss:second-level}, and define
\[
H_{\min}(\cC,\cC')
:=\min\Bigl\{\,H(\alpha,\alpha')\ \Bigm|\ 
\alpha_*\subset\cC,\ \alpha'_*\subset\cC',\ \ord(\alpha-\alpha')=\delta\Bigr\}.
\]

\medskip

As a consequence of Theorem~\ref{c:invar1} and the Newton--Puiseux algorithm applied to an intermediate compact edge of $\PP(f_x,\gamma)$, the associated second-level exponent admits a canonical choice which can be read off from the vertical intercept of that edge.

\begin{corollary}\label{cor:Hmin_equals_omega_plus_delta}
	Let $f:(\C^2,0)\to(\C,0)$ be mini-regular in $x$ and let $\cC=\GC(\gamma_*)$ be a gradient canyon of degree $d_{\cC}>1$. Let $E_\delta$ be a compact edge of $\PP(f_x,\gamma)$ of co-slope $\delta$ with $\delta_{top}<\delta<d_{\cC}$, and let $\omega$ be the $Y$--intercept of its supporting line (equivalently, $\omega=\min\{q+\delta m\mid (m,q)\in\Supp(F_X)\}$ for $F(X,Y)=f(X+\gamma(Y),Y)$). Then there exists a polar arc $\gamma'$ with $\ord(\gamma-\gamma')=\delta$ and, writing $\cC':=\GC(\gamma'_*)$, one has
	\[
	H_{\min}(\cC,\cC')=\omega+\delta < h+\delta-1.
	\]
	In particular, $H_{\min}(\cC,\cC')$ is a bi-Lipschitz invariant, and so is $\omega=H_{\min}(\cC,\cC')-\delta$.
\end{corollary}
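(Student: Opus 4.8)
The plan is to produce a polar arc $\gamma'$ directly from the compact edge $E_\delta$ by running one step of the Newton--Puiseux algorithm on $F_X(X,Y)=f_x(X+\gamma(Y),Y)$, and then to compute $H(\gamma,\gamma')$ explicitly in terms of the edge data. First I would recall that $E_\delta$ is a compact edge of $\PP(f_x,\gamma)=\PP(F_X)$ of co-slope $\delta$ with $\delta_{top}<\delta<d_{\cC}$; its supporting line has equation $q+\delta m=\omega$, so $\omega=\min\{q+\delta m\mid (m,q)\in\Supp(F_X)\}$ and $(0,\omega)$, resp.\ the right endpoint, are its endpoints on the axes. The quasi-homogeneous initial part $\In_{E_\delta}F_X$ is a polynomial in $X$ (after factoring the appropriate power of $Y$) with at least two monomials, hence has a nonzero root; choosing such a root gives a term $c Y^\delta$ with $c\neq 0$ so that, setting $\gamma'(Y):=\gamma(Y)+cY^\delta+\hot$, the order of $F_X(cY^\delta+\hot,Y)$ strictly exceeds $\omega$, i.e.\ $\gamma'$ lies ``deeper'' than $E_\delta$. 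One checks that $\gamma'$ can be completed to an actual Puiseux root of $f_x$ (a polar arc): this is exactly the Newton--Puiseux algorithm, and it terminates because $f_x$ is a genuine polynomial-type germ; the key point is just that $E_\delta$ being a \emph{compact} edge guarantees the initial polynomial is not a monomial, so a nonzero root exists.

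Next I would compute $h(\gamma')$, $a'$, $H$, using Lemma~\ref{lem:h-and-a-stable}: since $\ord(\gamma-\gamma')=\delta>\delta_{top}$, that lemma gives $h(\gamma')=h$ and $a'=a$, so the normalisation constants coincide and $H=\ord\bigl[f(\gamma(y),y)-f(\gamma'(y),y)\bigr]$ (the $\tfrac1a,\tfrac1{a'}$ factors being equal). Now I would expand the difference along the deformation from $\gamma$ to $\gamma'$: writing $\gamma'-\gamma = cY^\delta+\hot$ and using Taylor's formula,
\[
f(\gamma'(y),y)-f(\gamma(y),y)=f_x(\gamma(y),y)\cdot(\gamma'(y)-\gamma(y))+\tfrac12 f_{xx}(\gamma(y),y)(\gamma'-\gamma)^2+\cdots .
\]
Because $\gamma$ is a polar arc, $f_x(\gamma(y),y)\equiv 0$, so this first-order term vanishes identically; the relevant contribution comes from the Taylor expansion in $X$ of $F$ combined with the factorisation of $F_X$. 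The cleanest route is: $\ord_Y\bigl[f(\gamma'(y),y)-f(\gamma(y),y)\bigr]=\ord_Y\!\int_0^1 F_X\bigl(s(\gamma'(y)-\gamma(y)),y\bigr)\,\d s\cdot(\gamma'-\gamma)$, and since $\gamma'-\gamma\sim cY^\delta$ the substitution $X\mapsto scY^\delta$ into each monomial $X^mY^q$ of $F_X$ produces $Y^{q+\delta m}$, whose minimal exponent over $\Supp(F_X)$ is $\omega$; hence the integrand has order $\omega$ (the root was chosen precisely so that the $\In_{E_\delta}$ contribution does \emph{not} cancel the $s$-integral for generic... — actually the $s$-integral of a nonzero quasihomogeneous polynomial in $s$ is nonzero), and multiplying by $(\gamma'-\gamma)\sim cY^\delta$ adds $\delta$, giving $H=\omega+\delta$. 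Finally, since this $H$ is realised and $H_{\min}(\cC,\cC')\le H=\omega+\delta$, while a matching lower bound $H_{\min}\ge\omega+\delta$ follows because \emph{any} polar arc $\alpha'$ with $\ord(\gamma-\alpha')=\delta$ produces, by the same integral computation, $H(\gamma,\alpha')\ge \omega+\delta$ (the exponent $q+\delta m$ is minimised by $\omega$ regardless of the leading coefficient of $\alpha'-\gamma$), we get $H_{\min}(\cC,\cC')=\omega+\delta$.

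For the inequality $\omega+\delta<h+\delta-1$, equivalently $\omega<h-1$, I would argue from the geometry of $\widehat{\PP}(f_x,\gamma)$: the point $(0,h-1)$ lies on the augmented polygon and the edge $E_\delta$ lies strictly below the top, so its supporting line of co-slope $\delta$ passes below $(0,h-1)$ — more precisely, the line $L$ of co-slope $d_{\cC}$ through $(1,h-1)$ bounds $\Supp(F_X)$ from below (this is the construction recalled before Figure~\ref{fig:3}, with co-slope $=d(\gamma)=d_{\cC}$), and since $\delta<d_{\cC}$ the supporting line of $E_\delta$ is steeper in co-slope and its $Y$-intercept $\omega$ satisfies $\omega<(h-1)+d_{\cC}\cdot 0$... — here I must be careful and instead compare at the relevant vertex: $\omega=q^*+\delta m^*$ for the dot $(m^*,q^*)$ on $E_\delta$, while $h-1\le q^*+d_{\cC}(m^*-1)+d_{\cC}$ — cleaner: since $(1,h-1)$ lies on or above the line of co-slope $\delta$ supporting $E_\delta$ only when $\delta\ge$ the co-slope from $(1,h-1)$ to $E_\delta$, and $\delta<d_{\cC}$ forces the strict inequality $\omega+\delta<h-1+\delta$, i.e.\ $\omega<h-1$. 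I would spell this out as: the dot $(m^*,q^*)$ on $E_\delta$ satisfies $q^*+d_{\cC} m^*\ge h-1+d_{\cC}$ by the line $L$; subtracting, $\omega - (h-1+d_{\cC}) = q^*+\delta m^* - q^* - d_{\cC} m^* +\text{(terms)}<0$ since $\delta<d_{\cC}$ and $m^*\ge 1$. Once $\omega+\delta<h+\delta-1$ is established, condition \eqref{eq:H-cond} of Theorem~\ref{c:invar1} holds for the pair $(\gamma,\gamma')$, so $H_{\min}(\cC,\cC')=\omega+\delta$ transforms under $\varphi$ by the identity, hence is a bi-Lipschitz invariant, and therefore so is $\omega=H_{\min}(\cC,\cC')-\delta$.

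The main obstacle, and the step requiring the most care, is the precise order computation $H=\omega+\delta$: one must verify that the substitution of the chosen root into $F$ genuinely produces order exactly $\omega+\delta$ and not higher, i.e.\ that no unexpected cancellation occurs after integrating in $s$ — this is where the choice of a nonzero root of the edge polynomial $\In_{E_\delta}F_X$ is used, together with the fact that the $s$-primitive of a nonzero polynomial with nonnegative-exponent monomials is again nonzero. A secondary subtlety is checking that the truncated arc $\gamma+cY^\delta$ indeed extends to a bona fide polar arc of $f$ (a Puiseux root of $f_x$) with the same contact $\delta$ with $\gamma$, and that one may pass from this arc to its canyon $\cC'$ without altering $h$, $a$, or $H$; both follow from Lemma~\ref{lem:h-and-a-stable} and Lemma~\ref{lem:augNP_canyon}, but the bookkeeping of "h.o.t." terms must be done cleanly so that $\ord(\gamma-\gamma')=\delta$ is preserved exactly.
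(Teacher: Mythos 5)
Your overall strategy coincides with the paper's: one Newton--Puiseux step on the edge $E_\delta$ of $\PP(F_X)$ to produce $\gamma'$, Lemma~\ref{lem:h-and-a-stable} to reduce $H$ to $\ord\bigl(F(0,Y)-F(\eta(Y),Y)\bigr)$, a $\nu$-valuation estimate for the lower bound $H\ge\omega+\delta$, comparison with the line $L$ of co-slope $d_{\cC}$ for the inequality $\omega<h-1$, and Theorem~\ref{c:invar1} for the invariance. However, there is a genuine gap at exactly the step you flag as ``the main obstacle''. Writing $\In_\nu(F_X)=Y^\omega P_\delta(X/Y^\delta)$ and $Q_\delta(Z)=\int_0^Z P_\delta(t)\,dt$, your integral computation gives leading coefficient $\int_0^1 cP_\delta(sc)\,ds=Q_\delta(c)$ at order $\omega+\delta$. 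The fact that the $s$-primitive of a nonzero polynomial is a nonzero \emph{polynomial} does not prevent its \emph{value} $Q_\delta(c)$ from vanishing at the particular root $c$ of $P_\delta$ you selected: e.g.\ for $Q_\delta(Z)=Z^2(Z-1)^2$ one has $P_\delta=Q_\delta'$ with the nonzero root $c=1$, yet $Q_\delta(1)=0$, so that choice would give $H>\omega+\delta$ and the existence claim would fail for that $\gamma'$. The paper's proof hinges on choosing the root carefully: since $P_\delta$ is not a monomial, $Q_\delta$ is not of the form $c(Z-a)^N$, hence $Q_\delta'=P_\delta$ has a root $p$ with $Q_\delta(p)\neq 0$, and it is \emph{that} root which must be fed into the Newton--Puiseux step. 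Without this selection argument your proof does not establish $H(\gamma,\gamma')=\omega+\delta$.

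Two smaller points. First, your derivation of $\omega<h-1$ is garbled: you invoke a \emph{lower} bound $q^*+d_{\cC}m^*\ge h-1+d_{\cC}$ for a dot on $E_\delta$, which is both misstated (for a dot $(m^*,q^*)$ of $F_X$ the line $L$ gives $q^*+d_{\cC}m^*\ge h-1$) and in the wrong direction --- to bound $\omega$ from above you need a dot where $q^*+d_{\cC}m^*$ is \emph{at most} $h-1$. The correct argument takes $(m^*,q^*)$ to be a dot of $F_X$ with $(m^*+1,q^*)$ \emph{on} $L$, so that $q^*+d_{\cC}m^*=h-1$ exactly, whence $\omega\le q^*+\delta m^*<q^*+d_{\cC}m^*=h-1$ using $m^*\ge 1$. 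Second, a Puiseux root of $f_x$ is a polar arc only if additionally $f(\gamma'(y),y)\not\equiv 0$; this must be checked (it follows because a root of $f$ would have contact $\le\delta_{top}<\delta$ with $\gamma$), and your write-up identifies the two notions without this verification.
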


\begin{proof}
	Set $F(X,Y):=f(X+\gamma(Y),Y)$ and $G(X,Y):=F(X,Y)-F(0,Y)$, so that $G_X=F_X$. Since $F_X(0,Y)\equiv 0$, the series $G$ has no terms of $X$--degree $0$ or $1$.
	
	Let $\nu=[\delta,1]$ and let $\omega$ be the $Y$--intercept of the supporting line of $E_\delta$ (cf.\ Figure~\ref{fig:4}). Then there exists a non-monomial polynomial $P_\delta\in\C[Z]$ such that
	\[
	\In_\nu(F_X)(X,Y)=Y^\omega\,P_\delta\!\left(\frac{X}{Y^\delta}\right).
	\]
	Since $F_X(0,Y)\equiv 0$, the face polynomial satisfies $P_\delta(0)=0$. Set
	\[
	Q_\delta(Z):=\int_0^Z P_\delta(t)\,dt\in\C[Z],
	\]
	so that $Q'_\delta=P_\delta$ and, since $G_X=F_X$,
	\[
	\In_\nu(G)(X,Y)=Y^{\omega+\delta}\,Q_\delta\!\left(\frac{X}{Y^\delta}\right).
	\]
	As $P_\delta$ is not a monomial, $Q_\delta$ is not of the form $c(Z-a)^N$ (note that $Q_\delta(0)=0$), hence
	$Q'_\delta=P_\delta$ has a zero $p\notin Z(Q_\delta)$; in particular $p\in\C^*$ and $Q_\delta(p)\neq 0$.

	By the Newton--Puiseux algorithm applied to the edge $E_\delta$, the equality $P_\delta(p)=0$ yields a Puiseux solution $\eta(Y)=pY^\delta+\hot$ of $F_X(X,Y)=0$. Set $\gamma'(Y):=\gamma(Y)+\eta(Y)$ and $\cC':=\GC(\gamma'_*)$. We claim that $f(\gamma'(y),y)\not\equiv 0$, hence $\gamma'$ is a polar arc. Indeed, if $f(\gamma'(y),y)\equiv 0$, then $\gamma'$ is a Puiseux root of $f$ and therefore $\ord(\gamma-\gamma')\le \delta_{top}$ by the definition of $\delta_{top}$ (cf.\ Section~\ref{s:newtonpoly}), contradicting $\delta>\delta_{top}$.

	Since $\delta>\delta_{top}$, Lemma~\ref{lem:h-and-a-stable} implies $\ord f(\gamma'(y),y)=h$ and the leading coefficient at order $h$ coincides with that of $f(\gamma(y),y)$. Therefore
	\[
	H(\gamma,\gamma')=\ord\bigl(F(0,Y)-F(\eta(Y),Y)\bigr)=\ord\,G(\eta(Y),Y).
	\]
	Using $\In_\nu(G)$ and $Q_\delta(p)\neq 0$ we get $G(\eta(Y),Y)=Y^{\omega+\delta}Q_\delta(p)+\hot$, hence $H(\gamma,\gamma')=\omega+\delta$.
	
	By definition of $H_{\min}(\cC,\cC')$ as the minimum of $H(\alpha,\alpha')$ among polar arcs $\alpha_*\subset\cC$, $\alpha'_*\subset\cC'$ with $\ord(\alpha-\alpha')=\delta$, the same $\nu$--valuation estimate gives $H(\alpha,\alpha')\ge \omega+\delta$ for all such pairs. Thus $H_{\min}(\cC,\cC')=\omega+\delta$.
	
	Finally, let $(m^*,q^*)$ be a Newton dot of $F_X$ such that $(m^*+1,q^*)$ lies on the line $L$ of co-slope $d_{\cC}$ through $(1,h-1)$ (Section~\ref{s:newtonpoly}), so $q^*+d_{\cC}m^*=h-1$. Since $\delta<d_{\cC}$,
	\[
	\omega\le q^*+\delta m^*<q^*+d_{\cC}m^*=h-1,
	\]
	hence $H_{\min}(\cC,\cC')=\omega+\delta<h+\delta-1$.

	Since $H_{\min}(\cC,\cC')=\omega+\delta<h+\delta-1$, Theorem~\ref{c:invar1} applies to any pair realising $H_{\min}$, and yields that $H_{\min}(\cC,\cC')$ is preserved by bi-Lipschitz right-equivalence. Consequently, $\omega=H_{\min}-\delta$ is preserved as well (and $\delta$ is preserved as a contact below the canyon degree, cf.\ Remark~\ref{rem:puiseux-canyons} and as a contact order between the two canyons, cf. Subsection~\ref{ss:lipinvar}).
\end{proof}

\begin{figure}[h]
	\centering
	\begin{tikzpicture}[scale=0.8]
		% Axes
		\draw[->] (-0.5, 0) -- (6, 0) ;
		\draw[->] (0, -0.5) -- (0, 9.8) ;
		
		% Grid
		\foreach \x in {1,2,3,4,5} {
			\draw[gray, dashed] (\x, 0) -- (\x, 9.3);
		}
		\foreach \y in {1,2,3,4,5,6,7,8,9} {
			\draw[gray, dashed] (0, \y) -- (5.5, \y);
		}

		% Convex Hull
		\draw[thick] (5, 0) -- (4, 1);
		\draw[thick] (4, 1)--(3,2.5);
		\draw[thick] (3, 2.5) --(2,4.5)node[pos=0.5, below left] {$E_{\delta}$};
		\draw[thick] (2, 4.5) --(1,7.25);
		\draw[thick, dashed] (2, 4.5) --(0,8.5);
		% Points
		\fill[black] (1, 7.25) circle (3pt) node[right]{$(m^*,q^*)$};
		\fill[black] (2, 4.5) circle (3pt);
		\fill[black] (3, 2.5) circle (3pt);
		\fill[black] (4, 1) circle (3pt) node[right] {$(\hat{m}_{top}-1, \hat{q}_{top})$};
		\fill[black] (5, 0) circle (3pt);
		\fill[black] (0, 8.5) circle (3pt) node[left]{$(0,\omega)$};

	\end{tikzpicture}
	\caption{Extension of $E_\delta$ to the vertical axis in $\PP(f_x,\gamma)$.}
	\label{fig:4}
\end{figure}
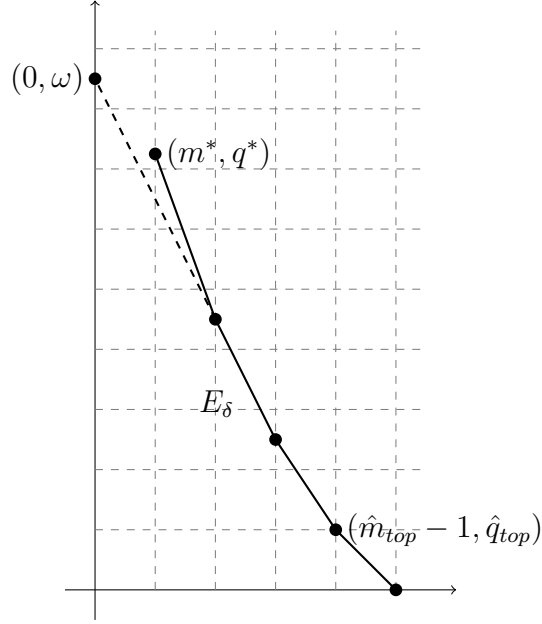

\medskip

\begin{theorem}\label{newton_inv}
	Let $f:(\C^2,0)\to(\C,0)$ be a mini-regular holomorphic function in $x$. Let $\cC=\GC(\gamma_*)$ be a gradient canyon of $f$ of degree $d_{\cC}>1$ and let $\gamma$ be any polar arc contained in $\cC$. Then the augmented Newton polygon $\widehat{\PP}(f_x,\cC)=\widehat{\PP}(f_x,\gamma)$ is a bi-Lipschitz invariant in the sense of Convention~\ref{conv:bilip}.
\end{theorem}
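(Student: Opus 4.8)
The strategy is to decompose the boundary of $\widehat{\PP}(f_x,\cC)$ into three pieces according to the co-slope of its compact edges, and to certify the bi-Lipschitz invariance of each piece from a result already established in the excerpt. Write $F(X,Y):=f(X+\gamma(Y),Y)$, so that $\PP(f_x,\gamma)=\PP(F_X)$; let $\delta_{top}$ be the co-slope of the top edge of $\PP(f,\gamma)$ and $d_{\cC}>1$ the canyon degree. The augmentation adds the vertex $(0,h_{\cC}-1)$; since $F_X$ has no dots on the lines $X=0,\ X=1$ (because $f_x(\gamma(y),y)\equiv 0$), this new vertex really is a vertex of $\widehat{\PP}(f_x,\cC)$, and the top compact edge of $\widehat{\PP}(f_x,\cC)$ is the segment from $(0,h_{\cC}-1)$ to the point $(\hat m_{top}-1,\hat q_{top})$ lying on the line $L$ of co-slope $d_{\cC}$ through $(1,h_{\cC}-1)$, as in the construction recalled from \cite{PT}. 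Thus $\widehat{\PP}(f_x,\cC)$ is determined by: (i) the vertex $(0,h_{\cC}-1)$; (ii) the compact edges of co-slope $<\delta_{top}$; (iii) the compact edges of co-slope in $(\delta_{top},d_{\cC})$; (iv) the top compact edge, of co-slope $d_{\cC}$.

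First I would dispose of the end data. By Lemma~\ref{lem:augNP_canyon} the polygon is independent of the polar arc $\gamma\subset\cC$, so it is legitimate to speak of $\widehat{\PP}(f_x,\cC)$; by Lemma~\ref{lem:augNP_canyon} again, together with \cite[Proposition~3.7]{PT}, the pair $(d_{\cC},h_{\cC})$ depends only on $\cC$, and both $d_{\cC}$ and $h_{\cC}$ are bi-Lipschitz invariants (the cluster $G_{\ell,d,B(h)}$ and the order $h_{\cC}$ figure in the ``identity card'' recalled in Subsection~\ref{ss:lipinvar}, items (a) and (d)). Hence the vertex $(0,h_{\cC}-1)$, the line $L$ of co-slope $d_{\cC}$ through $(1,h_{\cC}-1)$, and therefore the position of the top edge's right endpoint as soon as we know where $L$ meets the edge of co-slope $\le d_{\cC}$, are all pinned down by bi-Lipschitz-invariant data. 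Part (ii), the compact edges of co-slope $<\delta_{top}$, are topological invariants by Lemma~\ref{lem:NP-below-deltatop} applied to $f_x$ in place of $f$ — here one must observe that $\delta_{top}$ is intrinsic to $\cC$ (it is the height of the bar from which $\gamma$ grows, cf.\ Section~\ref{s:newtonpoly}) and is itself a topological invariant via the Kuo--Lu tree.

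The substantive part is (iii): the compact edges of co-slope strictly between $\delta_{top}$ and $d_{\cC}$. This is exactly where Corollary~\ref{cor:Hmin_equals_omega_plus_delta} enters. For each such edge $E_\delta$, of co-slope $\delta\in(\delta_{top},d_{\cC})$, let $\omega$ be the $Y$-intercept of its supporting line. The corollary produces a polar arc $\gamma'$ with $\ord(\gamma-\gamma')=\delta$, puts $\cC':=\GC(\gamma'_*)$, and shows $H_{\min}(\cC,\cC')=\omega+\delta$ with $\omega+\delta<h_{\cC}+\delta-1$; Theorem~\ref{c:invar1} then makes $H_{\min}(\cC,\cC')$ — and hence $\omega$, once $\delta$ is known — a bi-Lipschitz invariant, while $\delta$ itself is the contact order $k$ between the canyons $\cC,\cC'$ and is a bi-Lipschitz invariant by Subsection~\ref{ss:lipinvar}(c). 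So each supporting line of an intermediate edge is determined by the bi-Lipschitz-invariant pair $(\delta,\omega)$. Since a convex polygon is the intersection of the half-planes above its supporting lines, knowing all these lines, together with $L$ (co-slope $d_{\cC}$ through $(1,h_{\cC}-1)$) and the lines from part (ii), reconstructs the entire boundary of $\widehat{\PP}(f_x,\cC)$ above the line $X=0$; the only remaining freedom is the horizontal extent of the top edge, i.e.\ whether consecutive co-slopes $\delta<\delta'$ in the list actually bound a non-degenerate edge or are forced to meet at a common vertex, and this is read off from the $(\delta,\omega)$ data (two lines of distinct co-slopes meet in a unique point, and that point is a vertex of $\widehat{\PP}$ precisely when it lies on or above all the other supporting lines). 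Putting the three pieces together yields that every edge of $\widehat{\PP}(f_x,\cC)$, with its co-slope and horizontal length, is determined by bi-Lipschitz invariants, which is the assertion.

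The main obstacle is a bookkeeping subtlety rather than a deep difficulty: one must show that the finite list of co-slopes $\delta$ appearing as intermediate edges, and the invariants $(\delta,\omega)$ attached to them, are \emph{jointly} recovered from the bi-Lipschitz data — i.e.\ that one does not merely know each $\omega$ given the edge, but knows the set of edges. For this I would argue that the co-slopes $\delta$ in $(\delta_{top},d_{\cC})$ that occur are exactly the contact orders $k(i,j)$ appearing in the sub-cluster $K_{\ell,d,B(h),i}$ of $\cC$ that lie strictly below $d_{\cC}$ (each such contact is realised by some $\cC'$, and by Corollary~\ref{cor:Hmin_equals_omega_plus_delta} conversely each intermediate edge produces such a $\cC'$ at contact $\delta$), and these sub-cluster contact multisets are bi-Lipschitz invariants by Subsection~\ref{ss:lipinvar}(b),(c). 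A secondary point to handle carefully is the degenerate case where some intermediate co-slope is missing, i.e.\ the polygon passes directly from an edge of co-slope $\le\delta_{top}$ or from the vertex $(0,h_{\cC}-1)$ to the top edge $L$ with no edge of intermediate co-slope; there the claim is vacuous for part (iii), and (ii) plus the end data already determine the polygon.
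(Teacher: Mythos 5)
Your proposal follows essentially the same route as the paper's proof: fix $\gamma$ via Lemma~\ref{lem:augNP_canyon}, split the compact edges according to whether their co-slope is below $\delta_{top}$, strictly between $\delta_{top}$ and $d_{\cC}$, or at the top; treat the low part topologically, the intermediate edges via Corollary~\ref{cor:Hmin_equals_omega_plus_delta} together with Theorem~\ref{c:invar1} (each supporting line being recovered from the invariant pair $(\delta,\omega)$ with $\omega=H_{\min}-\delta$), and pin down the top edge from the invariance of $(h_{\cC},d_{\cC})$; then take the lower convex hull of the resulting supporting lines. The one step that does not work as you literally state it is part (ii): Lemma~\ref{lem:NP-below-deltatop} cannot be ``applied to $f_x$ in place of $f$'', because its hypotheses require $\gamma$ to be a polar arc of the function in question, whereas $f_x(\gamma(y),y)\equiv 0$ (so $\gamma$ is a root, not a polar arc, of $f_x$), and because the topological invariance in that lemma is read off the Kuo--Lu tree of the function it is applied to --- the tree of $f_x$ is not an invariant of the (bi-Lipschitz or topological) class of $f$, so the conclusion would be the wrong kind of invariance even if the hypotheses could be arranged. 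The paper's Case~1 instead applies the lemma to $f$ itself and then uses the elementary observation that, below co-slope $\delta_{top}$, the compact edges of $\PP(F_X)$ are obtained from those of $\PP(F)$ by the translation $[-1,0]$; with that substitution your argument coincides with the paper's. Your closing discussion of how the list of intermediate co-slopes is itself recovered from the sub-cluster contact data of Subsection~\ref{ss:lipinvar} is a point the paper passes over more quickly, and is a welcome addition rather than a divergence.
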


\begin{proof}
By Lemma~\ref{lem:augNP_canyon} the polygon $\widehat{\PP}(f_x,\gamma)$ depends only on the canyon, so we may fix an arbitrary polar arc $\gamma\in\cC$ and work with $\widehat{\PP}(f_x,\gamma)$. Set $h:=\ord f(\gamma(y),y)$ and $d:=d_{\cC}$.

Let $E_{top}$ be the top edge of $\PP(f,\gamma)$ and $\delta_{top}$ its co-slope, and let $E_{con}$ be the top edge of $\widehat{\PP}(f_x,\gamma)$ (its co-slope is $d$). We can group the edges of $\PP(f_x,\gamma)$ in the following way:
\begin{itemize}
\item[1.] the edges whose co-slopes are less than $\delta_{top}$.
\item[2.] the edges whose co-slopes are greater than $\delta_{top}$ and less than $d$. Denote them by $E_1, \ldots, E_s$, and their co-slopes by $\delta_1,\ldots ,\delta_s$. We can assume that $\delta_1>\ldots >\delta_s$.
\item[3.] the edge whose co-slope is equal to $\delta_{top}$. We denote it by $E_h$.
\end{itemize}

\medskip\noindent
\textbf{Preliminary reduction.}
If $\PP(f_x,\gamma)$ has no compact edge whose co-slope lies in the open interval $(\delta_{top},d)$, i.e.\ if $s=0$, then Case~2 below does not occur and the augmented polygon $\widehat{\PP}(f_x,\gamma)$ is determined by the topological part from Case~1 together with the top edge $E_{con}$ of co-slope $d$ through the point $(0,h-1)$. In this situation the proof reduces to Cases~1 and~3. Hence, in Case~2 we may assume $s\ge 1$.

\medskip
 
We will show that edges of each type are bi-Lipschitz invariants. 

\medskip

\noindent \textbf{Case 1.} 
Consider the part of $\PP(f,\gamma)$ consisting of compact edges whose co-slope is $<\delta_{top}$. By Lemma~\ref{lem:NP-below-deltatop}, this collection of edges, together with their co-slopes and horizontal lengths, is a topological invariant of $f$. The compact edges of $\PP(f_x,\gamma)=\PP(F_X)$ of co-slope $<\delta_{top}$ are obtained from the corresponding edges of $\PP(f,\gamma)=\PP(F)$ by translation by $[-1,0]$, hence they are topological, and therefore also bi-Lipschitz invariants.

\medskip

\noindent\textbf{Case 2.}
Assume $s\ge 1$. For each compact edge $E_i$ of $\PP(f_x,\gamma)$ with co-slope $\delta_i\in(\delta_{top},d)$, let $\omega_i$ be the $Y$--intercept of the supporting line of $E_i$ (see Figure~\ref{fig:4}).

By Corollary~\ref{cor:Hmin_equals_omega_plus_delta}, for each $i$ there exists a polar arc $\gamma_i'$ with $\ord(\gamma-\gamma_i')=\delta_i$ such that, writing $\cC_i'$ for the corresponding geometric or Puiseux-level canyon as in Remark~\ref{rem:puiseux-canyons}, one has $H_{\min}(\cC,\cC_i')=\omega_i+\delta_i<h+\delta_i-1$. Hence $\omega_i$ is a bi-Lipschitz invariant; moreover $\delta_i$ is a bi-Lipschitz invariant as a contact below the canyon degree; when the two canyons are geometrically distinct this is exactly the contact invariance recalled in Subsection~\ref{ss:lipinvar}, and in the Puiseux-level case it follows from Remark~\ref{rem:puiseux-canyons}.

Hence, for each $i$ the supporting line of $E_i$ is the line of co-slope $\delta_i$ with $Y$--intercept $\omega_i$, while the supporting line of $E_{con}$ is the line of co-slope $d$ through the fixed point $(0,h-1)$. These supporting lines are determined by bi-Lipschitz invariant data, and $\widehat{\PP}(f_x,\gamma)$ is their lower convex hull. Therefore the edges $E_{con},E_1,\ldots,E_s$ are bi-Lipschitz invariants.

\noindent \textbf{Case 3.} The position of $E_h$ is uniquely determined by the edges from Cases~1 and~2 and by the value $\delta_{top}$.

By Cases~1--3, every supporting line of $\widehat{\PP}(f_x,\gamma)$ is determined by bi-Lipschitz invariant data, hence preserved by the correspondence $\Phi$ of Convention~\ref{conv:bilip} (in Case~1, since $\varphi$ is a homeomorphism and $\Phi$ preserves the bar $B(h)$). Therefore $\widehat{\PP}(f_x,\cC)=\widehat{\PP}\bigl(g_x,\Phi(\cC)\bigr)$.

\end{proof}

\section{Multiplicity of gradient canyons}\label{s:multiplicity}
	
Throughout this section $f:(\C^2,0)\to(\C,0)$ is mini-regular in $x$ and
$\cC=\GC(\gamma_*)$ is a gradient canyon of degree $d:=d_\cC>1$. We use the Puiseux
factorisation of the derivative
\[
f_x(x,y)=v(x,y)\prod_{i=1}^{m-1}\bigl(x-\gamma_i(y)\bigr),\qquad v\ \text{a unit},
\]
the $\gamma_i\in\F_1$ being the Puiseux roots of $f_x$ listed with multiplicity: the
Puiseux conjugates of one irreducible polar curve occur as distinct indices, and a root occurring
with algebraic multiplicity is repeated accordingly. We call each $\gamma_i$ a \emph{Puiseux
	representative} of a polar and $\gamma_{i,*}$ its \emph{geometric (image) arc}. Put
\[
I(\cC):=\bigl\{\,i\in\{1,\dots,m-1\}\mid \gamma_{i,*}\in\cC\,\bigr\}.
\]
Since canyon membership is defined through the geometric arcs $\gamma_{i,*}$, the set $I(\cC)$ is a
union of full conjugacy classes: if $\gamma_{i,*}\in\cC$ then $\gamma_{j,*}\in\cC$ for every
conjugate $\gamma_j$ of $\gamma_i$, since conjugate representatives share the same geometric arc. We
write $h_\cC:=\ord f(\alpha(y),y)$ for the common order of $f$ along the arcs $\alpha\in\cC$, which is
the same for every such arc by \cite[Proposition~3.7(c)]{PT}.

\begin{definition}[Polar multiplicity]\label{def:PT-mult-canyon}
	The \emph{polar multiplicity} of $\cC$ is the number of Puiseux representatives it contains,
	counted with multiplicity,
	\[
	\mult(\cC):=\# I(\cC).
	\]
	Since one Puiseux representative of an irreducible polar curve lies in $\cC$ iff all its
	conjugates do, $\mult(\cC)$ counts every Puiseux representative of every polar curve contained in
	$\cC$; it coincides with the canyon multiplicity of \cite[Definition~2.3]{PT}.
\end{definition}

The goal of this section is to prove that $\mult(\cC)$ is a bi-Lipschitz invariant. We factor it as a
product of two quantities of a very different nature: a bar multiplicity
$\mult^{\mathrm{bar}}(\cC)$, read off the augmented Newton polygon, and a number of trunks
$b_\cC$, the number of contact classes of the polar representatives, equivalently the
total number of Puiseux trunk branches occurring in the canyon, whose invariance rests on the canyon-disk correspondence of \cite{PT}.

\medskip
\noindent\textbf{Bar multiplicity.}
Fix once and for all a Puiseux representative $\gamma$ of a polar in $\cC$.

\begin{definition}[Bar multiplicity]\label{def:bar-mult-canyon}
	The bar multiplicity of $\cC$ is
	\begin{equation}\label{eq:bar-mult-def}
		\mult^{\mathrm{bar}}(\cC):=\#\bigl\{\,i\in I(\cC)\mid \ord\bigl(\gamma_i(y)-\gamma(y)\bigr)\ge d\,\bigr\}
	\end{equation}
	(the indices counted with the algebraic multiplicity of the roots). Equivalently, it is the number
	of Puiseux representatives in $\cC$ sharing with $\gamma$ the truncation $\gamma^{<d}$ below order
	$d$. That this number does not depend on the chosen $\gamma$ is shown right after
	Lemma~\ref{lem:mu_equals_length}.
\end{definition}

\noindent\emph{Relation with the Kuo--Lu bar.} The polar representatives in $I(\cC)$ have the same
canyon order $h_\cC$, and are therefore associated with the same $B(h_\cC)$-cluster of the Kuo--Lu
tree $T(f)$ in the sense recalled in Subsection~\ref{ss:kuolu-tree} (without committing to a single
literal bar, as the representatives may be distributed over conjugate departing components); then
\eqref{eq:bar-mult-def} counts those representatives that, in addition, still share the trunk
$\gamma^{<d}$ of $\gamma$ at the rational level $d$ (which need not be a bar height). We stress that
this refinement is not read off $T(f)$: it involves the genuine contacts
$\ord(\gamma_i-\gamma)$ between the polar roots, and these are not determined by \(T(f)\): as shown by
\cite[\S4, Example~4.4]{kuo-lu}, the polar tree \(M(f_x)\)
cannot be reconstructed from \(T(f)\). The
quantity $\mult^{\mathrm{bar}}(\cC)$ is instead computed metrically, by the augmented Newton polygon,
as the next lemma shows. We do not name these groups, as only their common cardinality
$\mult^{\mathrm{bar}}(\cC)$ and their number $b_\cC$ (introduced below) are used.

The following lemma is the heart of the section: it identifies the bar multiplicity with a piece of
the augmented Newton polygon, hence with a bi-Lipschitz invariant.

\begin{lemma}\label{lem:mu_equals_length}
	With the notation above, let $E_{con}$ be the top compact edge of the augmented Newton polygon
	$\widehat{\PP}(f_x,\cC)$. Then
	\[
	\operatorname{len}_x(E_{con})=\mult^{\mathrm{bar}}(\cC).
	\]
\end{lemma}

\begin{proof}
	Assume $\gamma=\gamma_1$ and set $\Delta_i(y):=\gamma_i(y)-\gamma(y)$, so $\Delta_1\equiv 0$. To avoid
	a clash with the notation $F(X,Y)=f(X+\gamma(Y),Y)$ of Section~\ref{s:newtonpoly}, write
	\[
	\Psi(X,Y):=f_x(X+\gamma(Y),Y)=\tilde v(X,Y)\prod_{i=1}^{m-1}\bigl(X-\Delta_i(Y)\bigr),
	\]
	$\tilde v(X,Y):=v(X+\gamma(Y),Y)$ a unit, so that $\Psi=F_X$ and $\PP(\Psi)=\PP(F_X)=\PP(f_x,\gamma)$.
	
	Set $h:=\ord f(\gamma(y),y)=h_\cC$. By \eqref{eq:bar-mult-def} an index $i\in I(\cC)$ contributes to
	$\mult^{\mathrm{bar}}(\cC)$ iff $\ord\Delta_i\ge d$; conversely, if $\ord\Delta_i\ge d$ then
	$\gamma_{i,*}\in\GC(\gamma_*)=\cC$, so $i\in I(\cC)$. Hence
	\[
	\mult^{\mathrm{bar}}(\cC)=\#\bigl\{\, i\in\{1,\ldots,m-1\} \mid \ord\Delta_i\ge d \,\bigr\}.
	\]
	This counts the roots sharing the trunk of $\gamma$, not all of $\cC$: a conjugate
	representative $\gamma_i$ of $\gamma$ with $\ord\Delta_i<d$ has $\gamma_{i,*}\in\cC$ yet is excluded.
	
	Consider the weight $\nu=[d,1]$, i.e.\ the weighted order with $\nu(X)=d$ and $\nu(Y)=1$ (so that $\nu(X^aY^b)=ad+b$). Let $S_\nu$ be the set of Newton dots of $\Psi$ attaining the minimal $\nu$-value; this set may be a compact face or a single vertex of $\PP(\Psi)$. The corresponding initial form is $\In_\nu(\Psi)$. For each factor:
	\begin{itemize}
		\item if $\ord\Delta_i<d$, then $\In_\nu(X-\Delta_i)=-\In_\nu(\Delta_i)$ has $X$-degree $0$;
		\item if $\ord\Delta_i=d$, then $\In_\nu(X-\Delta_i)=X-\In_\nu(\Delta_i)$ has $X$-degree $1$;
		\item if $\ord\Delta_i>d$ (or $\Delta_i\equiv 0$), then $\In_\nu(X-\Delta_i)=X$ has $X$-degree $1$.
	\end{itemize}
	Since $\tilde v$ is a unit it does not affect initial forms, so
	\[
	\deg_X \In_\nu(\Psi)=\#\bigl\{\, i \mid \ord\Delta_i\ge d \,\bigr\}
	=\mult^{\mathrm{bar}}(\cC).
	\]
	
	It remains to identify $\deg_X\In_\nu(\Psi)$ with $\operatorname{len}_x(E_{con})$. Since $\gamma$ is a polar arc, $\Psi(0,Y)=f_x(\gamma(y),y)\equiv 0$, so $\PP(\Psi)$ has no dot on the line $X=0$ and every dot of $\Psi$ has $X$-degree $\ge1$. Moreover, by the definition of the gradient degree, equivalently by \cite[Lemma~3.4]{PT}, the line of co-slope $d$ through the added point $(0,h-1)$ supports $\PP(\Psi)$, and its contact set with $\PP(\Psi)$ is precisely $S_\nu$. Indeed,
	\[
	\frac{d}{dy}f(\gamma(y),y)=f_y(\gamma(y),y)
	\]
	because $f_x(\gamma(y),y)\equiv0$, and hence
	\[
	\ord f_y(\gamma(y),y)=\ord f(\gamma(y),y)-1=h-1.
	\]
	Thus the added point $(0,h-1)$ lies on this supporting line. After adjoining $(0,h-1)$, the same line gives the top compact edge $E_{con}$ of $\widehat{\PP}(f_x,\cC)$. Its left endpoint is $(0,h-1)$ and its right endpoint has $X$-coordinate equal to the maximal $X$-exponent occurring in $\In_\nu(\Psi)$, namely $\deg_X\In_\nu(\Psi)$. Therefore
	\[
	\operatorname{len}_x(E_{con})
	=
	\deg_X \In_\nu(\Psi)
	=
	\mult^{\mathrm{bar}}(\cC),
	\]
	as claimed.
\end{proof}

\noindent\emph{Independence of the choice of $\gamma$.} By Lemma~\ref{lem:augNP_canyon} the augmented
polygon $\widehat{\PP}(f_x,\cC)$ depends only on the canyon $\cC$, not on the representative used to
form it. Hence $\operatorname{len}_x(E_{con})$ is intrinsic to $\cC$, and by
Lemma~\ref{lem:mu_equals_length} so is $\mult^{\mathrm{bar}}(\cC)$: the right-hand side of
\eqref{eq:bar-mult-def} is the same for every representative $\gamma$ of every polar in $\cC$.

\begin{corollary}\label{cor:bar-mult}
	If $\cC$ is a gradient canyon of degree $d_\cC>1$, then $\mult^{\mathrm{bar}}(\cC)$ is a bi-Lipschitz
	invariant in the sense of Convention~\ref{conv:bilip}.
\end{corollary}
\begin{proof}
	By Theorem~\ref{newton_inv} the augmented polygon $\widehat{\PP}(f_x,\cC)$ is a bi-Lipschitz
	invariant, hence so is the horizontal length of its top edge $E_{con}$, which equals
	$\mult^{\mathrm{bar}}(\cC)$ by Lemma~\ref{lem:mu_equals_length}.
\end{proof}

\medskip
\noindent\textbf{The trunk and the number of trunks.}
For a Puiseux representative $\gamma_i(y)=\sum_q a_{iq}y^q$ write
\[
\gamma_i^{<d}(y):=\sum_{q<d}a_{iq}y^q
\]
for its \emph{Puiseux trunk}, the sum of the terms of order strictly below $d$.
If this sum is zero, we use the convention that it defines the smooth arc \(\{x=0\}\).
By the definition of the truncation, two representatives of $I(\cC)$ have contact
$\ge d$ iff they have the same Puiseux trunk.

Let
\begin{equation}\label{eq:equiv-d}
	i\equiv_d j
	\quad\Longleftrightarrow\quad
	\ord(\gamma_i-\gamma_j)\ge d .
\end{equation}

Thus the $\equiv_d$-classes are precisely the classes of representatives with
the same Puiseux trunk. For each such class choose one representative
$\gamma_i$ and take the geometric arc $(\gamma_i^{<d})_*$. We define the
\emph{trunk curve} of $\cC$ to be the reduced curve germ
\[
\operatorname{Tr}(\cC):=
\bigcup_{[i]\in I(\cC)/\!\equiv_d}(\gamma_i^{<d})_* ,
\]
where repeated components are taken only once. Thus $\operatorname{Tr}(\cC)$
need not be irreducible. Its irreducible components are the reduced geometric
arcs determined by the distinct trunk branches occurring in the canyon. Algebraic multiplicities of polar roots are not included in
\(\operatorname{Tr}(\cC)\); they are recorded in
\(\mult^{\mathrm{bar}}(\cC)\).

\begin{definition}[Number of trunks]\label{def:number-trunks}
	The \emph{number of trunks} of $\cC$ is
	\[
	b_\cC:=\#\bigl(I(\cC)/\!\equiv_d\bigr).
	\]
	Equivalently, \(b_\cC\) is the multiplicity at the origin of the reduced
	trunk curve:
	\[
	b_\cC=\mult_0\bigl(\operatorname{Tr}(\cC)\bigr).
	\]
	Indeed, each irreducible component of \(\operatorname{Tr}(\cC)\) is defined
	by one Puiseux trunk together with all its conjugates; the multiplicity of
	that component equals the number of distinct conjugate Puiseux trunks
	parametrising it. Summing over the irreducible components gives exactly the
	number of \(\equiv_d\)-classes. Repeated polar roots do not create new
	trunks; they are counted instead in \(\mult^{\mathrm{bar}}(\cC)\).
\end{definition}

\begin{lemma}[Multiplicity decomposition]\label{lem:bar-decomposition}
	$\mult(\cC)=b_\cC\cdot\mult^{\mathrm{bar}}(\cC)$.
\end{lemma}
\begin{proof}
	Group the indices of $I(\cC)$ according to their Puiseux trunk. Two indices fall in the same group
	iff their representatives have contact $\ge d$, so each group has cardinality
	$\mult^{\mathrm{bar}}(\cC)$ by the independence statement following
	Lemma~\ref{lem:mu_equals_length}. The number of groups is the number of distinct trunks occurring in
	$I(\cC)$, which equals $b_\cC$ by the discussion above. As the groups partition $I(\cC)$,
	$\mult(\cC)=\#I(\cC)=b_\cC\cdot\mult^{\mathrm{bar}}(\cC)$.
\end{proof}

We illustrate the two factors on the cusp, in two regimes; in each case the count is governed by the
trunk.

\begin{itemize}
	\item For $f=(x^2-y^3)^2+y^{6}$ the cusp $\{x^2=y^3\}$ is a single irreducible polar curve, with
	conjugate representatives $\pm y^{3/2}$; one computes $d=2$. The two trunks $(\pm y^{3/2})^{<2}=\pm
	y^{3/2}$ are conjugate and distinct, so $b_\cC=m_{puiseux}(y^{3/2})=2$, each group of bar
	multiplicity $1$, and $\operatorname{Tr}(\cC)=\{x^2=y^3\}$. (The third polar $\gamma_0=0$ has contact
	$\tfrac32<2$ with $\pm y^{3/2}$ and a different geometric arc, hence lies in a different canyon.)
	\item For $f=(x^2-y^3)^2+y^{5}$ one computes $d=\tfrac43$, and
	$\ord\bigl(y^{3/2}-(-y^{3/2})\bigr)=\tfrac32\ge\tfrac43$, so $\pm y^{3/2}$ now share a trunk, which
	also contains $\gamma_0=0$ (contact $\tfrac32\ge d$). The common trunk is $\gamma^{<4/3}=0$, fixed by
	conjugation, so $b_\cC=1$: a single group of bar multiplicity $3$, with
	$\operatorname{Tr}(\cC)=\{x=0\}$. Here the value $3$ counts the three Puiseux representatives
	$0,\pm y^{3/2}$, which form two distinct geometric polar curves $\{x=0\}$ and $\{x^2=y^3\}$;
	it is not a count of geometric polar curves.
\end{itemize}

\begin{lemma}[The number of trunks is a bi-Lipschitz invariant]\label{lem:number-canyon-bars-invariant}
	Let $\cC$ be a gradient canyon of degree $d>1$, with trunk curve $\operatorname{Tr}(\cC)$. Then
	\begin{enumerate}
		\item[(i)] the intersection multiplicity of $\{f=0\}$ with the trunk curve is
		\[
		\mult_0\bigl(\{f=0\},\operatorname{Tr}(\cC)\bigr)=b_\cC\,h_\cC;
		\]
		\item[(ii)] $b_\cC$ is a bi-Lipschitz invariant in the sense of Convention~\ref{conv:bilip}.
	\end{enumerate}
\end{lemma}

\begin{proof}
	\emph{(i).} Write
	\[
	\operatorname{Tr}(\cC)=D_1\cup\cdots\cup D_s
	\]
	for the decomposition into irreducible reduced components. For each component
	\(D_\ell\), choose a Puiseux trunk \(\tau_\ell\) parametrising it, and let
	\(N_\ell=\mult_0(D_\ell)\). Then \(D_\ell\) has a normalisation of the form
	\[
	t\mapsto(\tau_\ell(t^{N_\ell}),t^{N_\ell}).
	\]
	Moreover, \(\tau_\ell\) differs from some polar representative in \(I(\cC)\)
	by terms of order at least \(d\); hence the geometric arc
	\((\tau_\ell)_*=D_\ell\) belongs to the canyon \(\cC\). Since the canyon order
	\(h_\cC\) is the same for every arc of \(\cC\)
	\cite[Proposition~3.7(c)]{PT}, we have
	\[
	f(\tau_\ell(y),y)=a_\ell y^{h_\cC}+\hot,\qquad a_\ell\neq0.
	\]
	Consequently,
	\[
	\mult_0(\{f=0\},D_\ell)
	=
	\ord_t f(\tau_\ell(t^{N_\ell}),t^{N_\ell})
	=
	N_\ell h_\cC.
	\]
	Summing over the irreducible components of the reduced trunk curve gives
	\[
	\mult_0\bigl(\{f=0\},\operatorname{Tr}(\cC)\bigr)
	=
	\sum_{\ell=1}^s \mult_0(\{f=0\},D_\ell)
	=
	h_\cC\sum_{\ell=1}^s N_\ell.
	\]
	By Definition~\ref{def:number-trunks},
	\[
	\sum_{\ell=1}^s N_\ell=\mult_0(\operatorname{Tr}(\cC))=b_\cC.
	\]
	Thus
	\[
	\mult_0\bigl(\{f=0\},\operatorname{Tr}(\cC)\bigr)=b_\cC h_\cC.
	\]
	
	\emph{(ii).} Fix the canyon correspondence $\Phi$ of Convention~\ref{conv:bilip}, and let
	$N_\cC$ be the number of connected canyon disks associated with the reduced trunk curve
	$\operatorname{Tr}(\cC)$ in a Milnor fibre; equivalently, take the union of the horn pieces centred
	at the distinct Puiseux trunks, with exponent $e<d$ sufficiently close to $d$.
	By \cite[(24)--(25) and the statement following~(25)]{PT}, these disks are counted by the
	intersection with the corresponding geometric truncation; in the present notation,
	\[
	N_\cC=\mult_0(\{f=0\},\operatorname{Tr}(\cC)).
	\]
	By \cite[Theorem~5.8]{PT}, applied canyon by canyon, the bi-Lipschitz map gives a
	degree-preserving bijection between the canyon disks of $\cC$ and those of $\Phi(\cC)$. Therefore
	$N_\cC=N_{\Phi(\cC)}$, or equivalently
	\[
	\mult_0(\{f=0\},\operatorname{Tr}(\cC))
	=
	\mult_0(\{g=0\},\operatorname{Tr}(\Phi(\cC))).
	\]
	Writing (i) on both sides gives $b_\cC\,h_\cC=b_{\Phi(\cC)}\,h_{\Phi(\cC)}$. Finally $h_\cC$ is itself a bi-Lipschitz
	invariant, so $h_\cC=h_{\Phi(\cC)}$. Dividing,
	$b_\cC=b_{\Phi(\cC)}$.
\end{proof}

\begin{corollary}\label{cor:polar-mult}
	If $\cC$ is a gradient canyon of degree $d_\cC>1$, then its polar multiplicity $\mult(\cC)$ is a
	bi-Lipschitz invariant in the sense of Convention~\ref{conv:bilip}.
\end{corollary}
\begin{proof}
	By Lemma~\ref{lem:bar-decomposition}, $\mult(\cC)=b_\cC\cdot\mult^{\mathrm{bar}}(\cC)$; the factor
	$\mult^{\mathrm{bar}}(\cC)$ is invariant by Corollary~\ref{cor:bar-mult} and the factor $b_\cC$ by
	Lemma~\ref{lem:number-canyon-bars-invariant}(ii). Hence $\mult(\cC)=\mult\bigl(\Phi(\cC)\bigr)$.
\end{proof}

\begin{remark}\label{rem:bars-symmetry}
	Two points deserve emphasis, both visible on the cusp examples. First, the bar multiplicity is
	governed by the Puiseux contact $\ord(\gamma_i-\gamma)$ between fixed representatives, whereas
	canyon membership is governed by the geometric arcs; consequently two conjugate
	representatives of one irreducible polar curve always lie in the same canyon, but share a trunk (and
	are counted together by $\mult^{\mathrm{bar}}$) only when their contact is $\ge d$, splitting into
	distinct trunks otherwise. Second, $\mult^{\mathrm{bar}}(\cC)$ and $\mult(\cC)$ count Puiseux
	representatives (with algebraic multiplicity), not geometric polar curves: as the second cusp
	example shows, bar multiplicity $3$ may be carried by two distinct geometric curves. None of the
	arguments above requires the polar locus to be irreducible or the polar roots simple.
\end{remark}

\section{The total curvature of a gradient canyon}\label{s:curvature}

The bi-Lipschitz invariance of the canyon multiplicity has a geometric counterpart: the total
Gaussian curvature that the level curves of $f$ concentrate along a gradient canyon is itself a
bi-Lipschitz invariant.

Let $f$ be mini-regular in $x$. At a regular point of a level curve $S_c:=\{f=c\}$,
$0<|c|\ll1$, this curve is a smooth real surface in $\C^2$ whose Gaussian curvature, in the
sign convention of \cite{KKP} for which the total curvature below is non-negative, is given by
\cite[(1.1)]{KKP}:
\[
K=\frac{2\,|\Delta_f|^2}{\bigl(|f_x|^2+|f_y|^2\bigr)^3},\qquad
\Delta_f:=\det\begin{pmatrix} f_{xx}&f_{xy}&f_x\\ f_{yx}&f_{yy}&f_y\\ f_x&f_y&0\end{pmatrix}.
\]
For $1\le e<\infty$, $0<\rho<\infty$ and $0<\eta\ll1$, we use the horn domain of
\cite[Definition~2.9]{KKP}, written here in the coordinates $(x,y)$:
\[
\Horn^{(e)}(\alpha_*;\rho;\eta):=
\bigl\{(x,y)\in\operatorname{Disc}(0;\eta)\mid
x=J^{(e)}(\alpha)(y)+c y^e,\ |c|\le\rho\bigr\}.
\]
For a polar arc $\gamma$ of degree $d=d_\cC>1$ and $\cC=\GC(\gamma_*)=\mathcal{L}^{(d)}(\gamma_*)$,
the curvature concentrated along $\cC$ is the \emph{total asymptotic Gaussian curvature}
\[
\mathcal{M}_f(\cC):=\lim_{\rho\to\infty}\ \lim_{\eta\to0}\
\left(\lim_{c\to0}\int_{S_c\cap\,\Horn^{(d)}(\gamma_*;\rho;\eta)} K\,dS\right).
\]
This is exactly $\mathcal{M}_f(\mathcal{L}^{(d)}(\gamma_*))$ in the notation of \cite[(2.19)--(2.20)]{KKP}; the
order of the two inner limits is the one specified in \cite[Note following~(2.20)]{KKP}.

\begin{theorem}[Koike--Kuo--P\u aunescu {\cite[Theorem~C]{KKP}}]
	\label{thm:KKP-C}
	For a gradient canyon $\cC=\GC(\gamma_*)$ with $1<d_\cC<\infty$,
	\[
	\mathcal{M}_f(\cC)=2\pi\bigl[\,\mu_f(\cC)+\mult(\cC)\,\bigr],\qquad
	\mu_f(\cC):=\sum_{i\in I(\cC)}\bigl(\ord f(\gamma_i(y),y)-1\bigr),
	\]
	where $\mu_f(\cC)$ is the \emph{partial Milnor number} of the canyon and $\mult(\cC)$ its polar
	multiplicity (Definition~\ref{def:PT-mult-canyon}).
\end{theorem}

Since all polar roots contained in $\cC$ share the order $h_\cC=\ord f(\gamma_i(y),y)$
(Subsection~\ref{ss:order-h}), one has $\mu_f(\cC)=\mult(\cC)\,(h_\cC-1)$, and
Theorem~\ref{thm:KKP-C} simplifies to
\begin{equation}\label{eq:curv-mult}
	\mathcal{M}_f(\cC)=2\pi\,\mult(\cC)\,h_\cC .
\end{equation}

\begin{corollary}\label{cor:curvature}
	Let $f:(\C^2,0)\to(\C,0)$ be mini-regular in $x$, and let $\cC=\GC(\gamma_*)$ be a gradient
	canyon of degree $1<d_\cC<\infty$. Then $\mathcal{M}_f(\cC)$ is a bi-Lipschitz invariant:
	$\mathcal{M}_f(\cC)=\mathcal{M}_g(\Phi(\cC))$ in the notation of Convention~\ref{conv:bilip}.
\end{corollary}
\begin{proof}
	By \eqref{eq:curv-mult} it suffices that both factors be preserved by $\Phi$. The polar
	multiplicity is preserved by Corollary~\ref{cor:polar-mult}, and the order $h_\cC$ is preserved
	by the identity-card part of Convention~\ref{conv:bilip}. Hence
	\[
	\mathcal{M}_f(\cC)=2\pi\,\mult(\cC)\,h_\cC=2\pi\,\mult(\Phi(\cC))\,h_{\Phi(\cC)}
	=\mathcal{M}_g(\Phi(\cC)).\qedhere
	\]
\end{proof}

\begin{remark}
	This is an equality of numbers: a bi-Lipschitz homeomorphism need not preserve Gaussian
	curvature pointwise, yet the total curvature it concentrates in corresponding canyons agrees.
	Thus the curvature-concentration data of \cite[Theorem~C]{KKP}, a priori analytic-geometric, is
	determined by the bi-Lipschitz class of $f$.
\end{remark}

\section{Examples}\label{s:examples}

		\begin{figure}[h]
	\centering
	
	\begin{minipage}{0.4\textwidth}
		\centering
		\begin{tikzpicture}[scale=0.6]
			\draw[step=1, dashed, color=gray] (0,0) grid (4.2,11.2);
			\draw[->] (0,0) -- (4.6,0);
			\draw[->] (0,0) -- (0,11.6);

			% red NP((f_t)_x, gamma_0) (representative of C1)
			\fill[black] (3,0) circle (3pt);
			\fill[black] (2,3) circle (3pt);
			\fill[black] (1,7) circle (3pt);
			\draw[thick] (3,0) -- (2,3) -- (1,7);
			
			% green augmentation point and edge
			\fill[black] (0,11) circle (3pt) node[left] {$(0,h-1)$};
			\draw[thick, dashed] (0,11) -- (1,7);
		\end{tikzpicture}
		
		\smallskip
		\textbf{$t\neq 0$: canyon $\cC_1$ (e.g.\ $\gamma=\gamma_0$)}
	\end{minipage}
	\hfill
	\begin{minipage}{0.4\textwidth}
		\centering
		\begin{tikzpicture}[scale=0.6]
			\draw[step=1, dashed, color=gray] (0,0) grid (4.2,11.2);
			\draw[->] (0,0) -- (4.6,0);
			\draw[->] (0,0) -- (0,11.6);
			
			% red NP((f_t)_x, gamma_-) (representative of C2)
			\fill[black] (3,0) circle (3pt);
			\fill[black] (2,3) circle (3pt);
			\fill[black] (1,6) circle (3pt);
			\draw[thick] (3,0) -- (2,3) -- (1,6);
			
			% green augmentation point and edge
			\fill[black] (0,11) circle (3pt) node[left] {$(0,h-1)$};
			\draw[thick, dashed] (0,11) -- (1,6);
		\end{tikzpicture}
		
		\smallskip
		\textbf{$t\neq 0$: canyon $\cC_2$ (e.g.\ $\gamma=\gamma_-$)}
	\end{minipage}
	
	\caption{For $t\neq 0$: the solid line segments are $\PP((f_t)_x,\gamma)$ for a representative polar arc $\gamma$ in the canyon, and the dashed segment is the extra edge in $\widehat{\PP}((f_t)_x,\cC)$ coming from the convex hull with $(0,h-1)$.}
	\label{fig:ft-canyon-tnz}
\end{figure}

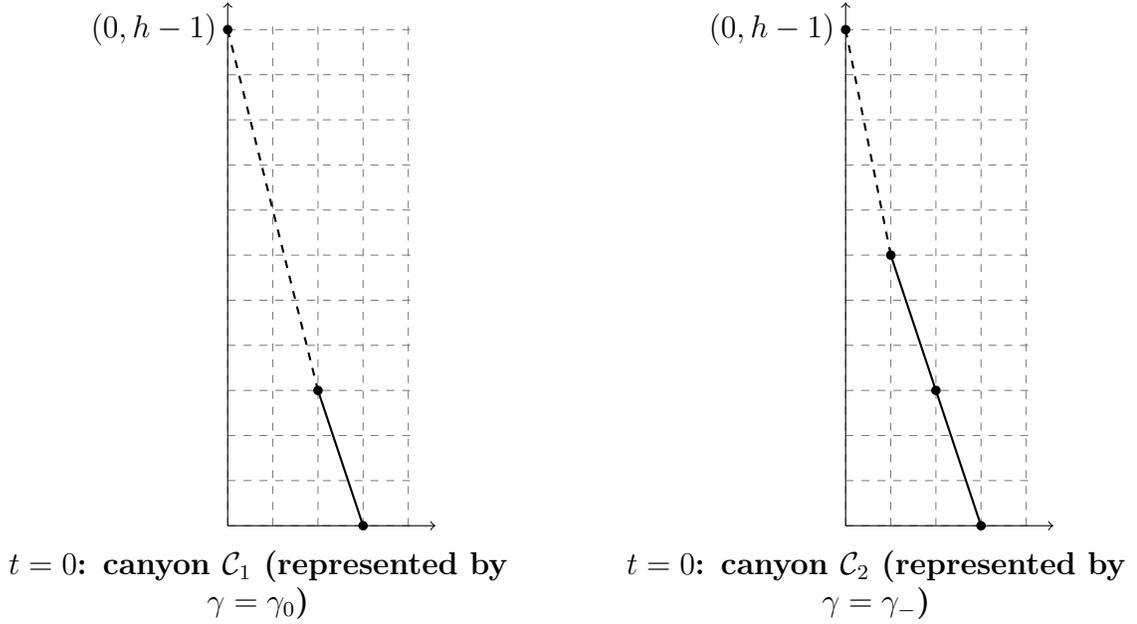
\begin{figure}[h]
	\centering
	
	\begin{minipage}{0.47\textwidth}
		\centering
		\begin{tikzpicture}[scale=0.6]
			\draw[step=1, dashed, color=gray] (0,0) grid (4.2,11.2);
			\draw[->] (0,0) -- (4.6,0);
			\draw[->] (0,0) -- (0,11.6);
			
			% red NP((f_0)_x, gamma_0) (double root; representative of C1)
			\fill[black] (3,0) circle (3pt);
			\fill[black] (2,3) circle (3pt);
			\draw[thick] (3,0) -- (2,3);
			
			% green augmentation point and edge
			\fill[black] (0,11) circle (3pt) node[left] {$(0,h-1)$};
			\draw[thick, dashed] (0,11) -- (2,3);
		\end{tikzpicture}
		
		\smallskip
		\textbf{$t=0$: canyon $\cC_1$ (represented by $\gamma=\gamma_0$)}
	\end{minipage}
	\hfill
	\begin{minipage}{0.47\textwidth}
		\centering
		\begin{tikzpicture}[scale=0.6]
			\draw[step=1, dashed, color=gray] (0,0) grid (4.2,11.2);
			\draw[->] (0,0) -- (4.6,0);
			\draw[->] (0,0) -- (0,11.6);
			
			% red NP((f_0)_x, gamma_-) (representative of C2)
			\fill[black] (3,0) circle (3pt);
			\fill[black] (2,3) circle (3pt);
			\fill[black] (1,6) circle (3pt);
			\draw[thick] (3,0) -- (2,3) -- (1,6);
			
			% green augmentation point and edge
			\fill[black] (0,11) circle (3pt) node[left] {$(0,h-1)$};
			\draw[thick, dashed] (0,11) -- (1,6);
		\end{tikzpicture}
		
		\smallskip
		\textbf{$t=0$: canyon $\cC_2$ (represented by $\gamma=\gamma_-$)}
	\end{minipage}
	
	\caption{For $t=0$: the same convention as in Figure~\ref{fig:ft-canyon-tnz}.}
	\label{fig:ft-canyon-t0}
\end{figure}

\begin{example}\label{ex:newt}
	Consider the family
	\[
	f_t(x,y)=x^4+x^3y^3+y^{12}+t x^2y^7,\qquad t\in(\C,0).
	\]
	Then
	\[
	(f_t)_x=x(4x^2+3xy^3+2ty^7),
	\]
	hence for $t\neq 0$ it has three polar arcs tangent to $\ell=\{x=0\}$:
	\[
	\gamma_0(y)=0,\qquad 
	\gamma_+(y)=-\frac23\,t\,y^4+O(y^5),\qquad
	\gamma_-(y)=-\frac34\,y^3+\frac23\,t\,y^4+O(y^5).
	\]
	(For $t=0$ the root $\gamma_0$ becomes double.)
	
	\smallskip
	A direct computation shows that $\ord f_t(\gamma(y),y)=12$ for these polars; set $h=12$. Moreover, $\gamma_0$ and $\gamma_+$ belong to one gradient canyon $\cC_1$ of degree $d_{\cC_1}=4$, whereas $\gamma_-$ generates a different canyon $\cC_2$ of degree $d_{\cC_2}=5$, and their mutual contact is
	\[
	\delta=\ord(\gamma_- -\gamma_0)=3.
	\]

	\smallskip
	\noindent \emph{Constancy of the augmented Newton polygons.}
	For $\cC_1$ one gets
	\[
	\widehat{\PP}((f_t)_x,\cC_1)=\operatorname{Conv}\{(0,11),(2,3),(3,0)\},
	\]
	and for $\cC_2$ one gets
	\[
	\widehat{\PP}((f_t)_x,\cC_2)=\operatorname{Conv}\{(0,11),(1,6),(3,0)\}.
	\]
	In particular, both augmented polygons are independent of $t$ (even though $\PP((f_t)_x,\gamma)$ may acquire extra dots on these edges when $t\neq 0$).

	By Lemma~\ref{lem:augNP_canyon}, the augmented polygon is constant along each canyon, hence Figures~\ref{fig:ft-canyon-tnz} and~\ref{fig:ft-canyon-t0} depict $\widehat{\PP}((f_t)_x,\cC_1)$ and $\widehat{\PP}((f_t)_x,\cC_2)$ (solid line segments: $\PP((f_t)_x,\gamma)$ for a representative polar $\gamma\in\cC_i$; dashed segment: the additional edge coming from $(0,h-1)$).

	\smallskip
	\noindent\emph{Nevertheless the family is not bi-Lipschitz trivial.}
	Take $\gamma_0\in\cC_1$ and $\gamma_-\in\cC_2$. One computes
	\[
	f_t(\gamma_0(y),y)=y^{12},\qquad
	f_t(\gamma_-(y),y)=\frac{229}{256}y^{12}+\frac{9}{16}t\,y^{13}+O(y^{14}).
	\]
	After normalisation by the leading coefficients, the second-level invariant of Theorem~\ref{c:invar1} yields $H=13$ and
	\[
	\Delta_t:=\tilde a(\gamma_0)-\tilde a(\gamma_-)=-\frac{144}{229}\,t,
	\qquad\text{with }H<h+\delta-1=14.
	\]	
	Therefore, if $f_t$ and $f_s$ are bi-Lipschitz right-equivalent, then $\Delta_s=c^{\,h-H}\Delta_t=c^{-1}\Delta_t$ for some constant $c\neq 0$. On the other hand, applying the first-level HP-type invariant to the polar $\gamma_0$ (for which the leading coefficient is $a=1$) forces $c^{12}=1$, hence $s=c^{-1}t$ implies $t^{12}=s^{12}$. In particular, the family is not bi-Lipschitz trivial.
\end{example}

\begin{example}\label{ex:newt1}
	Consider the family
	\[
	f_t(x,y)=x^4+y^{12}+t x^3y^3+(1-t)x^2y^7,\qquad t\in(\C,0).
	\]
	Then
	\[
	(f_t)_x=x\bigl(4x^2+3txy^3+2(1-t)y^7\bigr).
	\]
	
	\smallskip
	\noindent\emph{Generic fibre.}
	Assume $t\neq 0,1$. Then $(f_t)_x$ has three polar arcs tangent to $\ell=\{x=0\}$:
	\[
	\gamma_0(y)=0,\qquad
	\gamma_1(y)=-\frac34\,t\,y^3+O(y^4),\qquad
	\gamma_2(y)=-\frac{2(1-t)}{3t}\,y^4+O(y^5).
	\]
	A direct computation gives $\ord f_t(\gamma_i(y),y)=12$ for $i=0,1,2$; set $h=12$. Moreover $\gamma_0$ and $\gamma_2$ lie in one gradient canyon $\cC_1$ of degree $d_{\cC_1}=4$, while $\gamma_1$ generates a different canyon $\cC_2$ of degree $d_{\cC_2}=5$, and their mutual contact is
	\[
	\delta=\ord(\gamma_1-\gamma_0)=3.
	\]	
	The corresponding augmented Newton polygons are (solid line: $\PP((f_t)_x,\gamma)$, dashed line: the extra edge from $(0,h-1)$):
	\[
	\widehat{\PP}((f_t)_x,\cC_1)=\operatorname{Conv}\{(0,11),(1,7),(2,3),(3,0)\},
	\]
	\[
	\widehat{\PP}((f_t)_x,\cC_2)=\operatorname{Conv}\{(0,11),(1,6),(3,0)\},
	\]
	see Figure~\ref{fig:newfam-generic}.

	\begin{figure}[h]
	\centering
	\begin{minipage}{0.47\textwidth}
		\centering
		\begin{tikzpicture}[scale=0.6]
			\draw[step=1, dashed, color=gray] (0,0) grid (4.2,11.2);
			\draw[->] (0,0) -- (4.6,0);
			\draw[->] (0,0) -- (0,11.6);
			% red: NP((f_t)_x, gamma) in C1
			\fill[black] (3,0) circle (3pt);
			\fill[black] (2,3) circle (3pt);
			\fill[black] (1,7) circle (3pt);
			\draw[thick] (3,0) -- (2,3) -- (1,7);
			% green: augmentation
			\fill[black] (0,11) circle (3pt) node[left] {$(0,h-1)$};
			\draw[thick, dashed] (0,11) -- (1,7);
		\end{tikzpicture}
		\smallskip
		\textbf{$t\neq 0,1$: canyon $\cC_1$ (e.g.\ $\gamma=\gamma_0$)}
	\end{minipage}
	\hfill
	\begin{minipage}{0.47\textwidth}
		\centering
		\begin{tikzpicture}[scale=0.6]
			\draw[step=1, dashed, color=gray] (0,0) grid (4.2,11.2);
			\draw[->] (0,0) -- (4.6,0);
			\draw[->] (0,0) -- (0,11.6);
			% red: NP((f_t)_x, gamma) in C2
			\fill[black] (3,0) circle (3pt);
			\fill[black] (2,3) circle (3pt);
			\fill[black] (1,6) circle (3pt);
			\draw[thick] (3,0) -- (2,3) -- (1,6);
			% green: augmentation
			\fill[black] (0,11) circle (3pt) node[left] {$(0,h-1)$};
			\draw[thick, dashed] (0,11) -- (1,6);
		\end{tikzpicture}
		\smallskip
		\textbf{$t\neq 0,1$: canyon $\cC_2$ (e.g.\ $\gamma=\gamma_1$)}
	\end{minipage}
	\caption{For $t\neq 0,1$: solid line is $\PP((f_t)_x,\gamma)$ for a representative polar arc $\gamma$ in the canyon, and dashed line is the extra edge in $\widehat{\PP}((f_t)_x,\cC)$ coming from the convex hull with $(0,h-1)$.}
	\label{fig:newfam-generic}
\end{figure}

\begin{figure}[H]
	\centering
	\begin{minipage}{0.47\textwidth}
		\centering
		\begin{tikzpicture}[scale=0.6]
			\draw[step=1, dashed, color=gray] (0,0) grid (4.2,11.2);
			\draw[->] (0,0) -- (4.6,0);
			\draw[->] (0,0) -- (0,11.6);
			% NP((f_0)_x, gamma_0)
			\fill[black] (3,0) circle (3pt);
			\fill[black] (1,7) circle (3pt);
			\draw[thick] (3,0) -- (1,7);
			% augmentation
			\fill[black] (0,11) circle (3pt) node[left] {$(0,h-1)$};
			\draw[thick, dashed] (0,11) -- (1,7);
		\end{tikzpicture}
		\smallskip
		\textbf{$t=0$: canyon generated by $\gamma=\gamma_0$}
	\end{minipage}
	\hfill
	\begin{minipage}{0.47\textwidth}
		\centering
		\begin{tikzpicture}[scale=0.6]
			\draw[step=1, dashed, color=gray] (0,0) grid (4.2,11.2);
			\draw[->] (0,0) -- (4.6,0);
			\draw[->] (0,0) -- (0,11.6);
			% NP((f_0)_x, gamma_+) : same edge, possibly extra dot on it
			\fill[black] (3,0) circle (3pt);
			\fill[black] (1,7) circle (3pt);
			\fill[black] (2,3.5) circle (3pt) node[right] {\scriptsize $(2,\tfrac{7}{2})$};
			\draw[thick] (3,0) -- (1,7);
			% augmentation
			\fill[black] (0,11) circle (3pt) node[left] {$(0,h-1)$};
			\draw[thick, dashed] (0,11) -- (1,7);
		\end{tikzpicture}
		\smallskip
		\textbf{$t=0$: the two Puiseux-level canyons of $\gamma_+,\gamma_-$, same polygon}
	\end{minipage}
	\caption{For $t=0$: solid segments represent $\PP((f_0)_x,\gamma)$ for a representative polar root $\gamma$ in the indicated canyon, and the dashed segment is the extra edge in $\widehat{\PP}((f_0)_x,\cC)$ coming from the convex hull with $(0,h-1)$. Although conjugate, $\gamma_+,\gamma_-$ are separated at the Puiseux level since their contact $\tfrac72$ is below the degree $4$; the two representatives yield identical augmented polygons.}
	\label{fig:newfam-t0}
\end{figure}

	\smallskip
	\noindent\emph{Special fibre $t=0$.}
	We have
	\[
	f_0(x,y)=x^4+x^2y^7+y^{12},\qquad (f_0)_x=2x(2x^2+y^7).
	\]
	Its polar arcs tangent to $\ell$ are
	\[
	\gamma_0(y)=0,\qquad \gamma_{\pm}(y)=\pm \xi\,y^{7/2}+O(y^{9/2}),
	\qquad \xi^2=-\tfrac12.
	\]
	Moreover, $f_0$ has one canyon generated by $\gamma_0$ and one geometric canyon represented by the conjugate pair $\gamma_\pm$, all of degree $4$. At the Puiseux level, $\gamma_+$ and $\gamma_-$ are separated, since their contact $\ord(\gamma_+-\gamma_-)=\tfrac72$ is below the common canyon degree $4$. For each of these Puiseux representatives one gets the same augmented polygon
	\[
	\widehat{\PP}((f_0)_x,\cC)=\operatorname{Conv}\{(0,11),(1,7),(3,0)\},
	\]
	see Figure~\ref{fig:newfam-t0}. In particular, the collection of augmented Newton polygons (and the canyon degrees they encode) differs for $t=0$ and for $t\neq 0,1$, hence the family $f_t$ is not bi-Lipschitz right-trivial.
	\end{example}

\section*{Acknowledgements}
We would like to thank the anonymous referee for constructive feedback.

This paper is part of a project initiated jointly with Mihai Tib\u ar, who sadly passed away before the completion of the manuscript. We gratefully acknowledge Mihai's outstanding contributions and remember him with affection both as a colleague and as a dear friend.

The authors acknowledge partial support from the grant ``Singularities and Applications'' -- CF 132/31.07.2023 funded by the European Union -- NextGenerationEU -- through Romania's National Recovery and Resilience Plan. The first named author is partially supported by the grant of Narodowe Centrum Nauki, number 2024/55/B/ST1/01412.

\end{document}